\theoremstyle{plain}
\newtheorem{theorem}{Theorem}[section]
\newtheorem{proposition}[theorem]{Proposition}
\newtheorem{corollary}[theorem]{Corollary}
\newtheorem{lemma}[theorem]{Lemma}
\theoremstyle{definition}
\newtheorem{definition}[theorem]{Definition}
\newtheorem{example}[theorem]{Example}
\newtheorem{remark}[theorem]{Remark}
\newtheorem{conjecture}[theorem]{Conjecture}
\theoremstyle{remark}
\newtheorem{notation}[theorem]{Notation}
\numberwithin{equation}{section}
\newcommand{\N}{\mathbb N}
\newcommand{\Z}{\mathbb Z}
\newcommand{\Q}{\mathbb Q}
\newcommand{\R}{\mathbb R}
\newcommand{\C}{\mathbb C}
\newcommand{\Id}{\mathrm{Id}}
\newcommand{\Ot}{\operatorname{O}}
\newcommand{\SO}{\operatorname{SO}}
\newcommand{\spec}{\operatorname{Spec}}
\newcommand{\Spin}{\operatorname{Spin}}
\newcommand{\Pin}{\operatorname{Pin}}
\newcommand{\Le}{L}
\newcommand{\diag}{\operatorname{diag}}
\newcommand{\mult}{\operatorname{mult}}
\newcommand{\bs}{\backslash}
\begin{document}

\title{Properties of the Dirac spectrum on three dimensional lens spaces}
\author{Sebastian Boldt}
\address{Institut f\"ur Mathematik, Humboldt-Universit\"at zu Berlin, D-10099 Berlin, Germany}
\email{boldt@math.hu-berlin.de}
\date{March 31st 2015}

\subjclass[2010]{Primary 58J50. Secondary 58J53, 53C27, 11R18}
\keywords{Dirac spectrum, lens spaces, isospectrality, computations in cyclotomic fields}
\thanks{The author was supported by DFG Sonderforschungsbereich 647}

\maketitle
\begin{abstract}
	We present a spectral rigidity result for the Dirac operator on lens spaces. More specifically, we show that each homogeneous lens space and each three dimensional lens space $\Le(q;p)$ with $q$ prime is completely characterized by its Dirac spectrum in the class of all lens spaces.
\end{abstract}
\section{Introduction}

Let $M$ be a compact Riemannian spin manifold, i.e., a compact oriented Riemannian manifold with a fixed spin structure. Then there is a canonical first order differential operator $D$ on $M$ called the (spin-) Dirac or Atiyah-Singer operator. This operator is elliptic and self-adjoint and hence possesses a discrete real spectrum $\spec_D(M)$ consisting of eigenvalues with finite multiplicities. A typical question in spectral geometry is to what extent the geometry of $M$ is determined by $\spec_D(M)$ or by the spectrum of any other canonical geometric operator.

In this article, we investigate this question for three dimensional lens spaces and for homogeneous lens spaces. A lens space is a quotient of the sphere $S^{2m-1}$ by a cyclic group of isometries.

Spectrally, lens spaces were first examined by A.\ Ikeda and Y.\ Yamamoto in \cite{0415.58018} and \cite{0426.10028}. These authors proved that homogeneous lens spaces and three dimensional lens spaces are completely determined by the spectra of their Laplace-Beltrami operators in the class of all lens spaces.

Spectra are seldom explicitly computable. For a $2m-1$ dimensional lens space $\Le=\Le(q;p_1,\ldots,p_m)$ (see Section~\ref{sec:setup} for notation and definitions), the eigenvalues of the Laplacian are $k(k+2(m-1)),\; k\in\N_0$, with corresponding multiplicities $m_k$ which depend on the lens space $\Le$. Ikeda and Yamamoto introduced the generating function $F^\Le$ of the Laplace operator on $\Le$, defined by
$$
F^\Le(z) = \sum_{k=0}^{\infty}m_kz^k\,.
$$
By definition, this power series encodes the whole spectrum. Hence, two lens spaces $\Le$ and $\Le'$ are Laplace isospectral if and only if $F^\Le=F^{\Le'}$. Ikeda and Yamamoto prove that the generating functions have meromorphic extensions to $\C$ of the form
\begin{equation}\label{eqn:laplace-generating-function}
	F^\Le(z)=\frac{1}{q}\sum_{k=0}^{q-1}\frac{1-z^2}{\prod_{j=1}^m (\xi_q^{k p_j}-z)(\xi_q^{- k p_j}-z)}
\end{equation}
where $\xi_q$ denotes the $q$-th root of unity $e^{2\pi i/q}$.

They then use formula \eqref{eqn:laplace-generating-function} to prove spectral rigidity of homogeneous lens spaces by determining the order of the poles of $F^\Le$. In the non-homogeneous case, a careful analysis of the poles and residues of $F^\Le$ leads to the following set of equations if the three dimensional lens spaces $\Le(q;p)$ and $\Le(q;s)$ are isospectral:
\begin{align}\label{eqn:intro_0}
\cot\frac{k(p+1)}{q}\pi-\cot\frac{k(p-1)}{q}\pi+\cot\frac{k(p^*+1)}{q}\pi-\cot\frac{k(p^*-1)}{q}\pi\\
= \cot\frac{k(s+1)}{q}\pi-\cot\frac{k(s-1)}{q}\pi+\cot\frac{k(s^*+1)}{q}\pi-\cot\frac{k(s^*-1)}{q}\pi\notag
\end{align}
for all $1\le k\le q-1$ s.t. $k(p\pm1)\not\equiv 0\pmod q$ and $k(s\pm1)\not\equiv 0\pmod q$, where $p^*$ and $s^*$ are multiplicative inverses modulo $q$ of $p$ and $s$, respectively. The solutions of this set of equations are $p\equiv\pm s\pmod q$ and $p\cdot s\equiv\pm1\pmod q$. This, in turn, is equivalent to $\Le(q;p)$ and $\Le(q;s)$ being isometric.

The key ingredient to solving the equations in \eqref{eqn:intro_0} is the fact that the numbers $\cot\frac{k}{q}\pi$ with $1\le k \le \frac{q}{2}, (k,q)=1$ are linearly independent over $\Q$ (see e.g.\ \cite{0267.10065}), a fact that goes back to a very interesting problem of Chowla \cite{MR0249393}.  

Inspired by Ikeda's and Yamamoto's work, C. B\"ar introduced the generating functions of the Dirac operator on lens spaces, and more generally on spherical space forms, in his Ph.D. thesis \cite{0748.53022} (see also \cite{0848.58046}). The eigenvalues of the Dirac operator on the lens space $\Le=\Le(q;p_1,\ldots,p_m)$ with a fixed spin structure are $\pm\left(\frac{2m-1}{2}+k\right)$, $k\in\N_0$, with corresponding multiplicities $m_k^\pm$. The associated generating functions are
\begin{equation*}
F_\pm^\Le(z)=\sum\limits_{k=0}^{\infty}m_k^\pm z^k\,.
\end{equation*}
B\"ar proves that these generating functions have meromorphic extensions to $\C$. For odd $q$ these are (see Corollary~\ref{cor:generating_functions_lens_spaces}):
\begin{align*}
	\begin{split}
		F_+^\Le (z)= \frac{2^{m-1}}{q}\sum_{k=0}^{q-1}\frac{\displaystyle\sum_{\epsilon_1\cdots\epsilon_m=(-1)^{m+1}}\xi_{2q}^{(q+1)k\sum_j\epsilon_j p_j} -z\cdot \!\!\!\!\!\!\!\sum_{\epsilon_1\cdots\epsilon_m=(-1)^{m}}\xi_{2q}^{(q+1)k\sum_j\epsilon_j p_j}}{\displaystyle\prod_{j=1}^m (\xi_q^{k p_j}-z)(\xi_q^{- k p_j}-z)}\,,
	\end{split}\\
	\begin{split}
		F_-^{\Le} (z)= \frac{2^{m-1}}{q}\sum_{k=0}^{q-1}\frac{\displaystyle\sum_{\epsilon_1\cdots\epsilon_m=(-1)^{m}}\xi_{2q}^{(q+1)k\sum_j\epsilon_j p_j} -z\cdot \!\!\!\!\!\!\!\sum_{\epsilon_1\cdots\epsilon_m=(-1)^{m+1}}\xi_{2q}^{(q+1)k\sum_j\epsilon_j p_j}}{\displaystyle\prod_{j=1}^m (\xi_q^{k p_j}-z)(\xi_q^{- k p_j}-z)}\,.
	\end{split}
\end{align*}

The aim of this article is to shed light on the relationship between the Dirac spectrum and the geometry of lens spaces which are homogeneous or three-dimensional. We will use B\"ar's formulas and the path that has been paved by Ikeda and Yamamoto to do so. Namely, we will analyse the poles and residues of $F_\pm^\Le$, which encode spectral information, to deduce geometric properties of $\Le$.  

There are, however, some differences to the Laplace case. The Dirac spectrum depends not only on the metric, but also on the orientation and the spin structure. The dependence on the orientation is already immanent in the definition of a spin structure and one can easily see that a change of orientation causes the spectrum to be reflected about zero. The second dependence is more complicated, i.e., there is in general no relation between the spectra associated to inequivalent spin structures; see, e.g., \cite{0995.58020}. If, however, the Riemannian spin manifold $M$ has two inequivalent spin structures $(P,\xi)$ and $(Q,\eta)$ and an, say orientation preserving, isometry $f$ that \textit{relates} the two spin structures, i.e., the differential $\mathrm{d}f:\SO(M)\to\SO(M)$ as a map of the oriented orthonormal frame bundle $\SO(M)$ of $M$ lifts to a map $\widetilde{\mathrm{d}f}:P\to Q$, then the spectra of the Dirac operators associated with the spin structures $(P,\xi)$ and $(Q,\eta)$ coincide. 

Because of the above, we introduce the notion of two Riemannian spin manifolds being \textit{$\varepsilon$-spin-isometric}, which means there is an isometry that relates their spin structures and is, according to $\varepsilon=1$ or $\varepsilon=-1$, orientation preserving or reversing. We then extend the well-known Theorem~\ref{thm:lensspace-reg-isometry}, which states an equivalent condition of isometry for two lens spaces, to Theorems~\ref{thm:lensspace-or-isometry} and \ref{thm:lensspace-or-spin-isometry} which state equivalent conditions for $\varepsilon$-spin-isometry of two lens spaces.

Of course, $\varepsilon$-spin-isometric manifolds are $\varepsilon$-isospectral; that is for $\varepsilon=1$ their spectra coincide (including multiplicities) whereas for $\varepsilon=-1$ their spectra coincide after one of them is reflected about zero. Isospectrality is then understood as $\varepsilon$-isospectrality for some $\varepsilon\in\{\pm1\}$.

The two main results of this paper, Theorems~\ref{thm:homo-lensspace-rigidity} and \ref{thm:mainthm}, are concerned with the inverse direction. Theorem~\ref{thm:homo-lensspace-rigidity} states that if two lens spaces, one of which is homogeneous, are isospectral, then they are isometric. In dimension three, this statement is strengthened to state that two $\varepsilon$-isospectral lens spaces, one of which is homogeneous, are $\varepsilon$-spin-isometric.

Theorem~\ref{thm:mainthm} states that for prime $q$, if the lens spaces $\Le(q;p)$ and $\Le(q;s)$ are $\varepsilon$-isospectral, then they are $\varepsilon$-spin-isometric. The proof starts out for general $q$ in the same way as Ikeda's and Yamamoto's proof did and arrives at the following set of equations in the case that $q$ is odd (the case $q$ even is similar, see Corollary~\ref{cor:isospec-lensspaces-eqs}) and both lens spaces are non-homogeneous:
\begin{align}\label{eqn:intro_3}
	\xi_q^{\frac{q+1}2kp}\left(\cot\frac{k(p+1)}{q}\pi-\cot\frac{k(p-1)}{q}\pi\right)+\xi_q^{\frac{q+1}2kp^*}\left(\cot\frac{k(p^*+1)}{q}\pi-\cot\frac{k(p^*-1)}{q}\pi\right)\\
	= \xi_q^{\frac{q+1}2ks}\left(\cot\frac{k(s+1)}{q}\pi-\cot\frac{k(s-1)}{q}\pi\right)+\xi_q^{\frac{q+1}2ks^*}\left(\cot\frac{k(s^*+1)}{q}\pi-\cot\frac{k(s^*-1)}{q}\pi\right)\notag
\end{align}
for all $1\le k\le q-1$ s.t. $k(p\pm1)\not\equiv 0\pmod q$ and $k(s\pm1)\not\equiv 0\pmod q$.
Unlike in the case of the numbers $\cot\frac{k}{q}\pi$, $(k,q)=1$, there is no theorem about the linear independence of the numbers $\xi_q^k\cot\frac{l}{q}\pi$. In fact, it is hard to even formulate such a statement as there are nontrivial linear dependences among these numbers due to dimensional reasons.

However, at the end of \cite{0426.10028}, Yamamoto gave an alternative proof for the solutions of the equations \eqref{eqn:intro_0} in case $q$ is a prime number. That proof uses techniques from analytic number theory, namely, the theory of $\lambda$-adic series in the cyclotomic fields $\Q_q=\Q\left(\xi_q\right)$, and it carries over to the case of the Dirac operator. Thus, we will use this technique to solve the equations \eqref{eqn:intro_3}.

Unfortunately, that proof does not work for arbitrary $q\in\N$. Numerical calculations for $q$ in a large range suggest that Theorem~\ref{thm:mainthm} is true if one drops the assumption that $q$ is prime, see Remark~\ref{rem:main-conjecture_consequences} and Conjecture~\ref{con:all_q}, but the method of proof would have to be different.

This paper's main focus lies on three dimensional lens spaces; one can also ask about Dirac isospectrality of higher dimensional lens spaces. This has been investigated in \cite{2014arXiv1412.2599B}. With a representation theoretic approach, explicit formulas for the multiplicities $m_k^\pm$ have been found, and from this criteria for the isospectrality of lens spaces have been derived. Furthermore, we were able to construct families of isospectral lens spaces in higher dimensions.

We refer the reader who is new to spin geometry to \cite{0688.57001} for a comprehensive introduction. A thorough treatment of the Dirac spectrum is given in \cite{1186.58020}. In particular, several examples of Dirac isospectral pairs and families are given in that book. 

This paper is organised as follows. In Section~\ref{sec:setup} we introduce lens spaces, describe their spin structures and their $+1$-spin-isometry classes. Section~\ref{sec:spectra} contains the description of the spectra of lens spaces via generating functions and closes with Theorem~\ref{thm:homo-lensspace-rigidity}, the statement about the spectral rigidity of homogeneous lens spaces. In Section~\ref{sec:three-dim}, we restrict to three dimensional non-homogeneous lens spaces and carry out the details that lead to Theorem~\ref{thm:mainthm} and Conjecture~\ref{con:all_q}. In Section~\ref{sec:eta-invariant}, we mention how the technique used in Section~\ref{sec:three-dim} carries over to give a new proof a known result about $\eta$-invariants of three dimensional lens spaces.

\section*{Dedication}
I dedicate this article to J\"org Sch\"ulke, my dear father and first teacher in mathematics.
\section{Lens Sapces, Spin Structures and Isometry classes}\label{sec:setup}

This section contains the setup used throughout the paper. We define lens spaces, describe their spin structures and isometry classes.  

\begin{definition}

Let $q\in\N$, $p_1,\ldots,p_m\in\Z$ with $(q,p_i)=1$ for $1\leq i\leq m$. Define $\gamma_q^p$ by
$$
\gamma_q^p=\gamma_q^{p_1,\ldots,p_m} := \diag\left(\left[
\begin{smallmatrix}
 \cos(2p_1\pi/q)&\sin(2p_1\pi/q)\\
 -\sin(2p_1\pi/q)&\cos(2p_1\pi/q)
 \end{smallmatrix}\right],\ldots,\left[
\begin{smallmatrix}
 \cos(2p_m\pi/q)&\sin(2p_m\pi/q)\\
 -\sin(2p_m\pi/q)&\cos(2p_m\pi/q)
 \end{smallmatrix}
 \right]\right)\in\SO(2m)\,.
$$
The lens space $\Le(q;p_1,\ldots,p_m)$ is defined as the quotient
$$
\Le(q;p_1,\ldots,p_m) = \langle \gamma_q^{p_1,\ldots,p_m} \rangle\setminus S^{2m-1}\, .
$$
\end{definition}
The matrix $\gamma_q^{p_1,\ldots,p_m}$ is of order $q$ and has, by assumption, only primitive $q$-th roots of unity as eigenvalues. Therefore, it generates a freely acting finite group of orientation preserving isometries of $S^{2m-1}$, and $\Le(q;p_1,\ldots,p_m)$ is thus canonically given the structure of an oriented Riemannian manifold.

The bundle of oriented orthonormal frames $\SO(S^{2m-1})$ of $S^{2m-1}$ is $\SO(2m)$ with projection onto the last column vector. A spin structure is a fibrewise non-trivial two-sheeted covering of the bundle of oriented orthonormal frames (see \cite[Theorem 1.4]{0688.57001}), which, in this case, is seen to be $\Spin(2m)$. Due to its simply-connectedness, this is the only spin structure of $S^{2m-1}$. Since lens spaces $\Le(q;p_1,\ldots,p_m)$ are quotients of the sphere, their spin structures arise as certain quotients of the sphere's spin structure $\Spin(2m)$ (see \cite[Proposition 1.4.2]{1186.58020}), which are in one-to-one correspondence with group homomorphisms $\tau:\Gamma\to\Spin(2m)$ such that $\Theta\circ \tau=\Id_\Gamma$, where $\Theta:\Spin(2m)\to\SO(2m)$ is the universal covering homomorphism.

Spin structures on lens spaces were first classified in \cite{0645.57019}, though not in the language described above.

\begin{proposition}\label{prop:lensspace-spin-str}
The lens space $\Le=\Le(q;p_1,\ldots,p_m)$ admits a spin structure if and only if $q$ is odd or $m$ is even. If $q$ is odd, the unique spin structure is given by
$$
\tau\left(\left(\gamma_q^p\right)^k\right):=\prod_{j=1}^m \left(\cos \tfrac{k(q+1)p_j\pi}{q}+\sin\tfrac{k(q+1)p_j\pi}{q}e_{2j-1}e_{2j} \right)\, .
$$
If $q$ and $m$ are even, there are precisely two spin structures given by
$$
\tau_h\left(\left(\gamma_q^p\right)^k\right):=(-1)^{k(h+h_q^p)}\prod_{j=1}^m \left(\cos \tfrac{kp_j\pi}{q}+\sin\tfrac{kp_j\pi}{q}e_{2j-1}e_{2j} \right)\,,
$$
for $h\in\{0,1\}$, where $h_q^p:=h_q^{p_1,\ldots,p_m}:=\sum_{j=1}^{m}\lfloor \tfrac{p_j}q \rfloor$.
\end{proposition}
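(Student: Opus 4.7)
I would use the correspondence recalled just before the statement: a spin structure on $\Le=\Le(q;p_1,\ldots,p_m)$ is the same as a homomorphism $\tau\colon\Gamma\to\Spin(2m)$ with $\Theta\circ\tau=\Id_\Gamma$, where $\Gamma=\langle\gamma\rangle$ and $\gamma:=\gamma_q^{p_1,\ldots,p_m}$. Since $\Gamma$ is cyclic of order $q$, such a $\tau$ is uniquely determined by $\tau(\gamma)$, which must lie in $\Theta^{-1}(\gamma)\subset\Spin(2m)$ and satisfy $\tau(\gamma)^q=1$. The whole proof thus reduces to computing the two preimages $\Theta^{-1}(\gamma)=\{\pm\tilde\gamma\}$ and deciding when one of $\pm\tilde\gamma$ has order dividing $q$.

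The element $\gamma$ is the commuting product of planar rotations by angle $2p_j\pi/q$ in the planes $\op{span}(e_{2j-1},e_{2j})$, and the standard lift of each factor under the double cover $\Spin(2)\to\SO(2)$ is $\pm\bigl(\cos(p_j\pi/q)+\sin(p_j\pi/q)\,e_{2j-1}e_{2j}\bigr)$. Because the bivectors $e_{2j-1}e_{2j}$ for distinct $j$ commute in $\op{Cliff}(2m)$ (a direct sign count using $e_ae_b=-e_be_a$, $a\neq b$), the two lifts of $\gamma$ are $\pm\tilde\gamma$ with
$$
\tilde\gamma:=\prod_{j=1}^m\bigl(\cos(p_j\pi/q)+\sin(p_j\pi/q)\,e_{2j-1}e_{2j}\bigr).
$$
Using $(e_{2j-1}e_{2j})^2=-1$, each factor satisfies $(\cos\alpha+\sin\alpha\,e_{2j-1}e_{2j})^q=\cos(q\alpha)+\sin(q\alpha)\,e_{2j-1}e_{2j}$, which at $\alpha=p_j\pi/q$ gives $(-1)^{p_j}$, so $\tilde\gamma^q=(-1)^{\sum_j p_j}$. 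If $q$ is odd then $(-\tilde\gamma)^q=-\tilde\gamma^q$, so exactly one of $\pm\tilde\gamma$ has order dividing $q$, yielding a unique spin structure. If $q$ is even, the hypothesis $(p_j,q)=1$ forces each $p_j$ odd, whence $\tilde\gamma^q=(-1)^m$; thus a spin structure exists iff $m$ is even, in which case both $\pm\tilde\gamma$ are homomorphisms and there are two spin structures.

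It remains to extract the explicit formulas. In the odd case, since $q+1$ is even, the addition formulas give $\cos\bigl((q+1)p_j\pi/q\bigr)+\sin\bigl((q+1)p_j\pi/q\bigr)\,e_{2j-1}e_{2j}=(-1)^{p_j}\bigl(\cos(p_j\pi/q)+\sin(p_j\pi/q)\,e_{2j-1}e_{2j}\bigr)$, so the claimed product equals $(-1)^{\sum p_j}\tilde\gamma$, whose $q$-th power is $(-1)^{(q+1)\sum p_j}=1$; this is therefore the unique valid lift. In the even case the two lifts $\pm\tilde\gamma$ are parametrised by $h\in\{0,1\}$, and the prefactor $(-1)^{h+h_q^p}$ is inserted precisely so that the formula is invariant under the replacement $p_j\mapsto p_j+q$, which flips the sign of the $j$-th Clifford factor while shifting $h_q^p$ by $1$; this makes the labeling $h$ intrinsic to the lens space rather than to the choice of representatives $p_1,\ldots,p_m$. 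The displayed expressions for $\tau((\gamma_q^p)^k)$ and $\tau_h((\gamma_q^p)^k)$ then follow by raising to the $k$-th power and using $(\cos\alpha+u\sin\alpha)^k=\cos(k\alpha)+u\sin(k\alpha)$ in each factor, together with $((-1)^{h+h_q^p})^k=(-1)^{k(h+h_q^p)}$ for the scalar.

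There is no deep obstacle: the proof is bookkeeping inside $\op{Cliff}(2m)$. The main care is with signs, above all verifying that $\tilde\gamma^q=(-1)^{\sum p_j}$ and that the correction $(-1)^{h+h_q^p}$ is exactly what compensates the sign flips caused by shifts $p_j\mapsto p_j+q$, so that $h\in\{0,1\}$ becomes a genuine invariant of the spin structure on $\Le(q;p_1,\ldots,p_m)$ rather than of the chosen tuple.
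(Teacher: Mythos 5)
Your proposal is correct and follows essentially the same approach as the paper: identify spin structures with homomorphisms $\tau\colon\Gamma\to\Spin(2m)$ covering the identity, reduce to choosing one of the two preimages $\pm\tilde\gamma$ of the generator and checking when it has order dividing $q$, and compute $\tilde\gamma^q=(-1)^{\sum p_j}$ inside $\op{Cliff}(2m)$. You are a bit more explicit than the paper in verifying the displayed formulas (the $(q+1)$-normalization in the odd case, and the role of $(-1)^{h+h_q^p}$ as a compensating sign making $h$ well-defined when the $p_j$ are shifted by $q$), but the structure of the argument is the same.
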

\begin{proof}
A group homormorphism $\tau:\langle\gamma_q^p\rangle\to\Spin(2m)$ with $\Theta\circ\tau=
\Id$ has to map $\gamma_q^p$ to one of its preimages under $\Theta$, which are $\pm\prod_{j=1}^m \left(\cos \frac{kp_j\pi}{q}+\sin\frac{kp_j\pi}{q}e_{2j-1}e_{2j} \right)$ (see, e.g.,\ \cite[pp. 173]{MR781344}). It thus suffices to determine the order of these elements. By elementary calculations in the group $\Spin(2m)$ (see, e.g.,\ \cite[Chapter I]{0688.57001})
\begin{equation*}
\begin{split}
\left(\pm\prod_{j=1}^m \left(\cos \frac{kp_j\pi}{q}+\sin\frac{kp_j\pi}{q}e_{2j-1}e_{2j} \right)\right)^q & = \left(\pm 1\right)^q\prod_{j=1}^m \left(\cos\left( kp_j\pi\right)+\sin\left(kp_j\pi\right) e_{2j-1}e_{2j} \right)\\
&=\left(\pm 1\right)^q \prod_{j=1}^{m}\left(-1\right)^{p_j}\\
&= \left(\pm 1\right)^q(-1)^{\sum_j p_j}\, .
\end{split}
\end{equation*}
If $q$ is odd, exactly one of the two preimages, depending on the parity of $\sum_j p_j$, has order $q$. If $q$ es even, the $p_i$ are necessarily odd so that each preimage has order $q$ if and only if $m$ is even. For $\tau_0$ and $\tau_1$ to be invariant modulo $q$ instead of $2q$, we introduce $h_q^p$.
\end{proof}
\begin{notation}
If $q$ is odd, let $\Le(q;p_1,\ldots,p_m)$ be equipped with its unique spin structure. If, on the other hand, $q$ and $m$ are even, denote by $\Le(q;p_1,\ldots,p_m;h)$ the lens space $\Le(q;p_1,\ldots,p_m)$ together with the spin structure $\tau_h$, where $h\in\{0,1\}$.
\end{notation}

We now turn our attention to the isometry classes of lens spaces. The following theorem is well known.
\begin{theorem}\label{thm:lensspace-reg-isometry}
Let $\Le:=\Le(q;p_1,\ldots,p_m)$ and $\Le':=\Le(q;s_1,\ldots,s_m)$. Then the following assertions are equivalent:
\begin{enumerate}
\item $\Le$ and $\Le'$ are isometric.
\item There is a number $\ell\in\Z$, a permutation $\sigma\in S_m$ and there are numbers $\varepsilon_i\in\{\pm 1\}$ such that
$$ \ell p_{\sigma(i)} \varepsilon_{\sigma(i)} \equiv s_i \pmod q $$
for every $1\leq i \leq m$.
\end{enumerate}
\end{theorem}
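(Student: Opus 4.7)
The plan is to pass to the universal cover $S^{2m-1}$ and translate the statement into a statement about conjugacy of the cyclic groups $\langle\gamma_q^p\rangle$ and $\langle\gamma_q^s\rangle$ inside $\Ot(2m)$. The bridge between the two formulations is: \emph{the lens spaces $\Le(q;p)$ and $\Le(q;s)$ are isometric if and only if there exists $A\in\Ot(2m)$ with $A\langle\gamma_q^p\rangle A^{-1}=\langle\gamma_q^s\rangle$.} One direction is immediate (left multiplication by $A$ descends), and the other follows because any isometry of the lens spaces lifts to an isometry of the universal cover $S^{2m-1}$, i.e.\ to an element of $\Ot(2m)$, whose conjugation action must send deck transformations to deck transformations.

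For the direction (2)$\Rightarrow$(1), I would construct $A$ explicitly from the given data $(\ell,\sigma,\varepsilon_i)$. The permutation $\sigma$ is realized by the block permutation matrix that permutes the $m$ coordinate $2$-planes according to $\sigma$; each sign $\varepsilon_i=-1$ is realized by conjugating within the $i$-th $2$-plane by $\bigl(\begin{smallmatrix}0&1\\1&0\end{smallmatrix}\bigr)\in\Ot(2)$, which reverses the angle of the rotation in that block. Composing these operations produces an $A\in\Ot(2m)$ with
\[
A\,\gamma_q^{p_1,\dots,p_m}\,A^{-1}=\gamma_q^{\varepsilon_{\sigma(1)}p_{\sigma(1)},\dots,\varepsilon_{\sigma(m)}p_{\sigma(m)}}.
\]
The congruences $\ell\,p_{\sigma(i)}\varepsilon_{\sigma(i)}\equiv s_i\pmod q$ say exactly that the $\ell$-th power of the right-hand side equals $\gamma_q^{s_1,\dots,s_m}$, and since $(\ell,q)=1$ (forced by the $s_i$ being units mod $q$), the cyclic group generated is unchanged; hence $A\langle\gamma_q^p\rangle A^{-1}=\langle\gamma_q^s\rangle$.

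For (1)$\Rightarrow$(2), I lift a given isometry to $A\in\Ot(2m)$ with $A\langle\gamma_q^p\rangle A^{-1}=\langle\gamma_q^s\rangle$. Then $A\,\gamma_q^p\,A^{-1}$ is a generator of $\langle\gamma_q^s\rangle$, so it equals $(\gamma_q^s)^\ell$ for some $\ell$ with $(\ell,q)=1$. Since conjugate elements of $\Ot(2m)$ have the same multiset of complex eigenvalues, and since real orthogonal matrices decompose uniquely up to order into $2$-dimensional rotation blocks (with the block angle determined up to sign), the multisets of unordered pairs
\[
\bigl\{\{p_j,-p_j\}\bmod q\bigr\}_{j=1}^m\quad\text{and}\quad\bigl\{\{\ell s_j,-\ell s_j\}\bmod q\bigr\}_{j=1}^m
\]
must coincide. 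This yields a permutation $\sigma\in S_m$ and signs $\varepsilon_{\sigma(i)}\in\{\pm 1\}$ with $\varepsilon_{\sigma(i)}\,p_{\sigma(i)}\equiv \ell s_i\pmod q$; multiplying through by $\ell^{-1}\bmod q$ and renaming $\ell\leadsto\ell^{-1}$ gives the desired congruence.

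The only real subtlety is the existence of the lift $A\in\Ot(2m)$, which rests on the fact that $S^{2m-1}\to\Le$ is a Riemannian covering so that isometries of the base lift to isometries of the total space; and the book-keeping for $\Ot(2m)$-conjugacy of orthogonal matrices in terms of their rotation-angle multiset. Both are standard, so I expect no real obstacle; the main task is to carry out the sign and permutation book-keeping cleanly.
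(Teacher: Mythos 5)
Your proposal is correct, and for the harder direction it takes a genuinely different route from the paper. For $(2)\Rightarrow(1)$ the two arguments are essentially identical: the paper writes down the explicit sphere isometry $\Psi$ permuting the coordinate $2$-planes and flipping $y$-coordinates where $\varepsilon_i=-1$, checks $(\gamma_q^p)^\ell=\Psi^{-1}\circ\gamma_q^s\circ\Psi$, and descends; your block-permutation-plus-reflection matrix $A$ is the same construction. For $(1)\Rightarrow(2)$, however, the paper simply observes that isometric implies homeomorphic and cites the classical topological classification of lens spaces (\S 31 of its reference), which rests on Reidemeister--Franz torsion. You instead exploit the rigidity of spherical space forms: an isometry of the quotients lifts to an element $A\in\Ot(2m)$ conjugating $\langle\gamma_q^p\rangle$ onto $\langle\gamma_q^s\rangle$, so $A\gamma_q^pA^{-1}=(\gamma_q^s)^\ell$ with $(\ell,q)=1$, and comparing the eigenvalue multisets $\{\xi_q^{\pm p_j}\}$ and $\{\xi_q^{\pm\ell s_j}\}$ produces $\sigma$ and the signs; replacing $\ell$ by its inverse mod $q$ gives (2). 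This is elementary, self-contained, and uses only linear algebra plus the standard lifting of isometries to the simply connected universal cover; what it does not give you is the stronger homeomorphism classification that the paper's citation provides (and which the paper leans on again, via the cited results 30.1 and 29.6, in the proof of Theorem~\ref{thm:lensspace-or-isometry} to control the orientation behaviour). All the steps you flag as standard -- the lift to $\Ot(2m)$, the fact that $\ell$ is a unit mod $q$, and the passage from equal eigenvalue multisets to a matching of the unordered pairs $\{p_j,-p_j\}$ -- do go through, so there is no gap.
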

\begin{proof}
	$(1)\to(2)$: If $\Le$ and $\Le'$ are isometric, they are certainly homeomorphic. Now \cite[\S 31]{0261.57009} asserts (2).
	
	To prove $(2)\to(1)$, we associate the $S^{2m-1}$-isometry
	\begin{equation}\label{eq:explicit-isometry}
		\Psi(x_1,y_1,\ldots,x_m,y_m):=(x_{\sigma(1)},\varepsilon_{\sigma(1)}y_{\sigma(1)},\ldots,x_{\sigma(m)},\varepsilon_{\sigma(m)}y_{\sigma(m)})
	\end{equation}
	with the data $\ell, \sigma$ and $\varepsilon_i,\, 1\le i\le m$.
	One easily checks that
	\begin{equation}\label{eq:generator-relation}
		(\gamma_q^p)^\ell=\Psi^{-1}\circ\gamma_q^s\circ\Psi\,,
	\end{equation}
	so that $\Psi$ induces an isometry $\psi:\Le\to\Le'$ by $[x] \mapsto [\Psi(x)]$.
\end{proof}

For any odd dimensional oriented Riemannian manifold $M$ we denote by $\tilde{M}$ the reversely oriented Riemannian manifold $M$ and define a $\SO(n;\R)$-equivariant map $F:\SO(M)\to\SO(\tilde{M})$ by $(v_1,\ldots,v_n)_x\mapsto (-v_1,\ldots,-v_n)_x=(v_1,\ldots,v_n)_x\cdot (-\Id)$,  where $(v_1\ldots,v_n)$ is an orthonormal basis of $T_xM$. Next, we associate with any spin structure $(P,\xi)$ of $M$ a spin structure $(\tilde{P},\tilde{\xi})$ of $\tilde{M}$ by setting $\tilde{P}:=P$ and $\tilde{\xi}:=F\circ\xi:\tilde{P}\to\SO(M)$.

Let $M$ and $N$ be odd dimensional spin manifolds with spin structures $(P,\xi)$ and $(Q,\eta)$, respectively, and let $f:M\to N$ be a smooth map.

\begin{definition}\label{def:spin-isometry}
	\begin{enumerate}
		\item The map $f$ \textit{relates} the spin structures $(P,\xi)$ and $(Q,\eta)$ if $f$ is orientation preserving and $f^*(Q,\eta)$ is equivalent to $(P,\xi)$, or if $f$ is orientation reversing and $f^*(Q,\eta)$ is equivalent to $(\tilde{P},\tilde{\xi})$.
		\item The map $f$ is a $+1$-\textit{isometry} if it is an orientation preserving isometry and a $-1$-\textit{isometry} if it is an orientation reversing isometry.
		\item The spin manifolds $M$ and $N$ are \textit{spin-isometric} if there exists an isometry $f:M\to N$ that relates their spin structures.
		\item Let $\varepsilon\in\{\pm 1\}$. Then $M$ and $N$ are \textit{$\varepsilon$-spin-isometric} if there exists an $\varepsilon$-isometry $f:M\to N$ that relates their spin structures.
	\end{enumerate}
\end{definition}

\begin{theorem}\label{thm:lensspace-or-isometry}
	Let $\varepsilon\in\{\pm 1\}$ and $\Le:=\Le(q;p_1,\ldots,p_m)$, $\Le':=\Le(q;s_1,\ldots,s_m)$. Then the following assertions are equivalent:
	\begin{enumerate}
		\item $\Le$ and $\Le'$ are $\varepsilon$-isometric.
		\item There is a number $\ell\in\Z$, a permutation $\sigma\in S_m$ and there are numbers $\varepsilon_i\in\{\pm 1\}$ such that
		\begin{eqnarray*}
			l p_{\sigma(i)} \varepsilon_{\sigma(i)} &\equiv & s_i \pmod {q} \quad   \forall 1\le i\le m\\
			\prod_{i=1}^m \varepsilon_i & = & \varepsilon .
		\end{eqnarray*}
	\end{enumerate}
\end{theorem}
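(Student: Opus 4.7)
My strategy is to extend the proof of Theorem~\ref{thm:lensspace-reg-isometry} by a determinant computation that records the orientation of the isometry, using the explicit map \eqref{eq:explicit-isometry} in one direction and lifting to the sphere in the other.

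For $(2)\Rightarrow(1)$, reuse the explicit isometry $\Psi$ from \eqref{eq:explicit-isometry}. In the standard block decomposition $\R^{2m}=\bigoplus_{i=1}^m\R^2$, the map $\Psi$ factors as the block permutation induced by $\sigma$ composed with the sign flip $(x_i,y_i)\mapsto(x_i,\varepsilon_i y_i)$ on each block. The block permutation, viewed as an element of $S_{2m}$, consists of pairs of cycles of equal length (each cycle of $\sigma$ doubles), hence has sign $+1$. Therefore $\det\Psi=\prod_{i=1}^m\varepsilon_i=\varepsilon$. Since the covering maps $S^{2m-1}\to\Le$ and $S^{2m-1}\to\Le'$ are orientation preserving (the deck groups lie in $\SO(2m)$), the induced isometry $\psi:\Le\to\Le'$ satisfies the same sign of determinant, so it is an $\varepsilon$-isometry.

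For $(1)\Rightarrow(2)$, given an $\varepsilon$-isometry $\psi:\Le\to\Le'$, lift it to an isometry $\Psi\in\Ot(2m)$ of $S^{2m-1}$. Since $\Psi$ intertwines the deck groups, there exists $\ell\in\Z$ with $(\ell,q)=1$ and $\Psi(\gamma_q^p)^\ell\Psi^{-1}=\gamma_q^s$. Consider the orthogonal decompositions $\R^{2m}=\bigoplus_j V_j=\bigoplus_i W_i$ into the $2$-dimensional $\gamma_q^p$- and $\gamma_q^s$-invariant blocks on which the two generators act as rotations by $2\pi p_j/q$ and $2\pi s_i/q$, respectively. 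Matching the primitive complex eigenvalues of $(\gamma_q^p)^\ell$ and $\gamma_q^s$ yields, after possibly normalizing $\Psi$ by a block-preserving element of the centralizer of $\gamma_q^s$ (which lies in $\SO(2m)$ and neither affects $\det\Psi$ nor the intertwining), a permutation $\sigma\in S_m$ with $\Psi(V_{\sigma(i)})=W_i$. On each block, $\Psi|_{V_{\sigma(i)}}:V_{\sigma(i)}\to W_i$ is an isometry of $\R^2$, either a rotation or a reflection; record this by $\varepsilon_{\sigma(i)}=+1$ or $-1$. Since rotations commute with rotations while conjugation by a reflection inverts a rotation, the intertwining on each block reads $\ell p_{\sigma(i)}\varepsilon_{\sigma(i)}\equiv s_i\pmod q$. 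Finally, the block-permutation-plus-sign-flip structure of $\Psi$ gives, as in the forward direction, $\varepsilon=\det\Psi=\prod_i\varepsilon_i$.

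The main obstacle I anticipate is the non-canonicity of the block decomposition when some rotation angles coincide (e.g.\ $p_j=p_{j'}$ or $s_i=s_{i'}$), in which case $\Psi$ need not send a single $V_{\sigma(i)}$ to a single $W_i$. The resolution is that the centralizer of $\gamma_q^s$ in $\Ot(2m)$ acts transitively on the admissible block matchings and (for $q\geq 3$) is contained in $\SO(2m)$; pre-composing $\Psi$ with a suitable element of this centralizer normalizes $\Psi$ into the required block form without changing $\det\Psi$ or the congruence for $\ell$, and the cases $q\in\{1,2\}$ can be handled directly.
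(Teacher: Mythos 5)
Your argument is correct, but for the nontrivial direction $(1)\Rightarrow(2)$ it takes a genuinely different route from the paper. The paper derives the congruences $\ell\varepsilon_{\sigma(i)}p_{\sigma(i)}\equiv s_i\pmod q$ by quoting the topological classification of lens spaces (\cite[30.1]{0261.57009}), and then obtains $\prod_i\varepsilon_i=\varepsilon$ indirectly: by \cite[29.6]{0261.57009} the given isometry is homotopic (through homotopy equivalences) to the explicit map induced by \eqref{eq:explicit-isometry}, so the two must have the same degree. You instead lift the isometry to an element $\Psi\in\Ot(2m)$ conjugating $(\gamma_q^p)^\ell$ to $\gamma_q^s$ as in \eqref{eq:generator-relation}, read off the congruences from matching eigenvalue pairs, and compute $\varepsilon=\det\Psi=\prod_i\varepsilon_i$ directly from the block structure (correctly noting that the doubled block permutation is even). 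This is more self-contained — it avoids the simple-homotopy machinery entirely and uses only that the map is an isometry — at the price of the normalization issue you identify when rotation angles coincide; your resolution is sound, since for $q\ge3$ the generator has no real eigenvalues, so its centralizer in $\Ot(2m)$ is a product of unitary groups, hence connected and contained in $\SO(2m)$, and it acts transitively on the admissible block decompositions of each isotypic summand. For $(2)\Rightarrow(1)$ both proofs use the same map $\Psi$; you merely make explicit the determinant computation that the paper leaves as "one notes that $\Psi$ is an $\varepsilon$-isometry." The paper's route generalizes immediately to homeomorphisms and homotopy equivalences (which is what the cited theorems classify), whereas yours exploits the rigidity of isometries of the round sphere.
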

\begin{proof}
	$(1)\to(2)$: We assume $q>2$. Let $f:\Le\to\Le'$ be an $\varepsilon$-isometry. Identify $\pi_1(\Le)$ and $\pi_1(\Le')$ with $\langle \gamma_q^p \rangle$ and $\langle \gamma_q^s \rangle$, respectively, and denote by $f_\#:\pi_1(\Le)\to\pi_1(\Le')$ the induced map on fundamental groups. Define $\ell\in\Z$ as the smallest non-negative integer satisfying
	$$
	f_\#\left((\gamma_q^p)^\ell\right)=\gamma_q^s \,.
	$$
	By \cite[30.1]{0261.57009}, there are numbers $\varepsilon_i\in\{\pm 1\}$ and a permutation $\sigma\in S_m$ such that
	$$
	\ell \varepsilon_{\sigma(i)}p_{\sigma(i)}\equiv s_i \pmod q \quad 1\le i\le m .
	$$
	It remains to show that $\prod_i \varepsilon_i = \varepsilon$. Suppose not. By \cite[29.6]{0261.57009}, the isometry $\psi$ induced by the one given by \eqref{eq:explicit-isometry} is homotopic to $f$ through homotopy-equivalences. In particular, $f$ and $\psi$ have the same orientational behaviour, which is a contradiction.
	
	To prove $(2)\to (1)$, we note that $\Psi$ given by \eqref{eq:explicit-isometry} is an $\varepsilon$-isometry.
\end{proof}
\begin{remark}
	Note that for odd $q$, the lens spaces $\Le(q;p_1,\ldots,p_m)$ have a unique spin structure. In this case, (1) of Theorem~\ref{thm:lensspace-or-isometry} is equivalent to the statement that $\Le$ and $\Le'$ are $\varepsilon$-spin-isometric.
\end{remark}

\begin{theorem}\label{thm:lensspace-or-spin-isometry}
	Let $q$ be even and $\varepsilon\in\{\pm 1\}$. Let $\Le:=\Le(q;p_1,\ldots,p_m;h)$ and $\Le':=\Le(q;s_1,\ldots,s_m;h')$. Then the following assertions are equivalent:
	\begin{enumerate}
		\item $\Le$ and $\Le'$ are $\varepsilon$-spin-isometric.
		\item There is a number $\ell\in\Z$, a permutation $\sigma\in S_m$ and there are numbers $\varepsilon_i\in\{\pm 1\}$ such that
		\begin{eqnarray*}
			l p_{\sigma(i)} \varepsilon_{\sigma(i)} &\equiv & s_i \pmod {q} \quad   \forall 1\le i\le m\,,\\
			\prod_{i=1}^m \varepsilon_i & = & \varepsilon\,, \\
			h+h'+h_q^p+h_q^s &\equiv & \frac 1q \sum_{i=1}^{m} \left(l p_{\sigma(i)} \varepsilon_{\sigma(i)}-s_i\right) \pmod 2\,.
		\end{eqnarray*}
	\end{enumerate}
\end{theorem}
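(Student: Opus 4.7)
The proof runs parallel to Theorem \ref{thm:lensspace-or-isometry}, so the first two conditions in (2) are already equivalent to the existence of an $\varepsilon$-isometry $\psi:\Le\to\Le'$ induced by the sphere isometry $\Psi$ from \eqref{eq:explicit-isometry}. The new content is the congruence on $h,h'$, which I plan to extract from a direct calculation in the Clifford algebra, using the formulas for $\tau_h$ from Proposition \ref{prop:lensspace-spin-str}.

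The core computation goes as follows. Fix a lift $\tilde\Psi\in\Pin(2m)$ of $\Psi$. Conjugation by elements of $\Pin(2m)$ acts on a product of two basis vectors by applying the underlying orthogonal transformation to each factor, so the explicit form of $\Psi$ in \eqref{eq:explicit-isometry} yields
\[
\tilde\Psi\, e_{2j-1}e_{2j}\,\tilde\Psi^{-1} \;=\; \Psi(e_{2j-1})\Psi(e_{2j}) \;=\; \varepsilon_j\, e_{2\sigma^{-1}(j)-1}e_{2\sigma^{-1}(j)}.
\]
Plugging this into the product formula for $\tau_h((\gamma_q^p)^\ell)$, re-indexing via $i=\sigma^{-1}(j)$, using $\varepsilon_j\sin\alpha=\sin(\varepsilon_j\alpha)$, and writing $\ell\, p_{\sigma(i)}\varepsilon_{\sigma(i)}=s_i+q\, n_i$, I obtain
\[
\tilde\Psi\,\tau_h\!\bigl((\gamma_q^p)^\ell\bigr)\,\tilde\Psi^{-1} \;=\; (-1)^{\ell(h+h_q^p)\,+\,\sum_i n_i\,+\,h'+h_q^s}\,\tau_{h'}(\gamma_q^s).
\]
Because $q$ is even and $(\ell,q)=1$ forces $\ell$ to be odd, this exponent reduces mod $2$ to $h+h'+h_q^p+h_q^s+\sum_i n_i$, which vanishes precisely when the third condition of (2) holds.

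It remains to translate the Clifford identity $\tilde\Psi\,\tau_h((\gamma_q^p)^\ell)\,\tilde\Psi^{-1}=\tau_{h'}(\gamma_q^s)$ into the $\varepsilon$-spin-isometry condition. For $\varepsilon=+1$, $\tilde\Psi\in\Spin(2m)$, and left multiplication by $\tilde\Psi$ descends to a spin-bundle isomorphism covering $\psi$ exactly when $\tilde\Psi\,\tau_h(\gamma)\,\tilde\Psi^{-1}\in\tau_{h'}(\Gamma')$ for every $\gamma$; combined with \eqref{eq:generator-relation} and the oddness of $\ell$, this is equivalent to the Clifford identity. For $\varepsilon=-1$, I would factor $\Psi=\Psi_0\Psi_+$ where $\Psi_0:(x_1,y_1,\ldots)\mapsto(x_1,-y_1,\ldots)$ is a standard or-reversing reflection and $\Psi_+\in\SO(2m)$, and check directly with the lift $\tilde\Psi_0=e_2\in\Pin(2m)$ that $\Psi_0$ realises a canonical or-reversing spin-isometry between $\Le(q;p_1,\ldots,p_m;h)$ and a lens space in standard form with appropriately modified $h$-parameter; this reduces the or-reversing case to the or-preserving one. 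The converse (1)$\Rightarrow$(2) then falls out: an $\varepsilon$-spin-isometry $\psi$ lifts to a sphere isometry $\Psi$, Theorem \ref{thm:lensspace-or-isometry} supplies the first two conditions, and the spin-relating hypothesis forces the Clifford equation to hold with $+1$ sign, giving the third.

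The principal obstacle is the careful treatment of the $\varepsilon=-1$ case, since Definition \ref{def:spin-isometry} compares the pullback spin structure against the reversely-oriented $(\tilde P,\tilde\xi)$ rather than against $(P,\xi)$ itself. The reduction via $\Psi_0$ is the cleanest route, but it depends on a small, separate computation of how $h_q^p$ and the $h$-parameter transform under reversing a single coordinate pair — essentially a miniature version of the same Clifford sign-tracking that drives the main argument.
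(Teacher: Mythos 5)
Your proposal takes the same route as the paper: reduce to the explicit isometry $\Psi$ from \eqref{eq:explicit-isometry} via homotopy invariance of the spin-structure mapping, lift $\Psi$ to $\pm\widetilde\Psi\in\Pin(2m)$, and check when the relation \eqref{eq:generator-relation} lifts to the identity $\widetilde\Psi\,\tau_h((\gamma_q^p)^\ell)\,\widetilde\Psi^{-1}=\tau_{h'}(\gamma_q^s)$; your Clifford computation is correct (the sign ambiguity in how conjugation by an odd $\widetilde\Psi$ acts on a single vector cancels when applied to the even products $e_{2j-1}e_{2j}$, so your formula holds for both parities of $\Psi$), and your congruence matches the paper's. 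The translation of the Clifford identity into the $\varepsilon=-1$ spin-relating condition — the step you flag as the principal obstacle and propose to handle by factoring through a reflection — is also left implicit in the paper, which simply works with $\widetilde\Psi\in\Pin(2m)$ and asserts the equivalence; so you have spotted the one place where the published proof is terse, and your planned treatment is a reasonable way to fill it in.
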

\begin{proof}
	The statement about the isometry and the orientation is clear from the previous theorem. Since the mapping of spin structures is a homotopy invariant of a map (in the class of homotopy equivalences), we can work with the isometry $\psi:\Le\to\Le'$ induced by the one given by  \eqref{eq:explicit-isometry}. Its lift $\Psi \in\Ot(2m)$ is covered by two elements $\pm\widetilde{\Psi}\in\Pin(2m)$. Now the relation \eqref{eq:generator-relation} lifts to $\Spin(2m)$ as
	$$
	\tau_h((\gamma_q^p)^\ell)=\widetilde{\Psi}^{-1}\cdot\tau'_{h'}(\gamma_q^s)\cdot\widetilde\Psi
	$$
	if and only if $h+h'+h_q^p+h_q^s \equiv \tfrac 1q \sum_{i=1}^{m} \left(l p_{\sigma(i)} \varepsilon_{\sigma(i)}-s_i\right) \pmod 2$.
	
\end{proof}

For any lens space $\Le(q;p_1,\ldots,p_m)$ we can always, either by one of the two last theorems or simply by choosing another generator $(\gamma_q^p)^k,\,k\in\Z$ with $(q,k)=1$, find a $(+1)$-spin-isometric lens space of the form $L(q;1,s_2,\ldots,s_m)$. In particular, every three dimensional lens space can be written as $\Le(q;1,p)$, which we will abbreviate as $\Le(q;p)$ from now on. Theorems~\ref{thm:lensspace-or-isometry} and \ref{thm:lensspace-or-spin-isometry} then take the following form.
\begin{corollary}\label{cor:3dim-or-isometry}
	Let $\varepsilon\in\{\pm 1\}$. The lens spaces $\Le(q;p)$ and $\Le(q;s)$ are $\varepsilon$-isometric if and only if
	$$
	p\equiv\varepsilon s\pmod q \quad \textrm{ or } \quad p\cdot s\equiv\varepsilon\pmod q \, .
	$$
\end{corollary}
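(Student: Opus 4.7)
The plan is to apply Theorem~\ref{thm:lensspace-or-isometry} directly in dimension $m=2$ using the presentations $\Le(q;1,p)$ and $\Le(q;1,s)$, so that $(p_1,p_2)=(1,p)$ and $(s_1,s_2)=(1,s)$. By that theorem, $\varepsilon$-isometry is equivalent to the existence of $\ell\in\Z$, a permutation $\sigma\in S_2$, and signs $\varepsilon_1,\varepsilon_2\in\{\pm 1\}$ with $\varepsilon_1\varepsilon_2=\varepsilon$ satisfying $\ell\, p_{\sigma(i)}\varepsilon_{\sigma(i)}\equiv s_i\pmod q$ for $i=1,2$. The proof is then a case split on the two elements of $S_2$.

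In the case $\sigma=\mathrm{id}$, the equation for $i=1$ reads $\ell\varepsilon_1\equiv 1\pmod q$. Since $\varepsilon_1^2=1$, this is equivalent to $\ell\equiv\varepsilon_1\pmod q$, and substituting into the equation for $i=2$ yields $\varepsilon_1\varepsilon_2\, p\equiv s\pmod q$, i.e.\ $\varepsilon p\equiv s\pmod q$, which is the first alternative. Conversely, given $p\equiv\varepsilon s\pmod q$ one immediately verifies the hypotheses of Theorem~\ref{thm:lensspace-or-isometry} by choosing $\ell=1$, $\sigma=\mathrm{id}$, $\varepsilon_1=1$, $\varepsilon_2=\varepsilon$.

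In the case $\sigma=(1\;2)$, the equations become $\ell p\varepsilon_2\equiv 1$ and $\ell\varepsilon_1\equiv s\pmod q$. From the second, $\ell\equiv s\varepsilon_1\pmod q$; substituting into the first and using $\varepsilon_1\varepsilon_2=\varepsilon$ together with $\varepsilon^2=1$ gives $ps\equiv\varepsilon\pmod q$, the second alternative. The converse is analogous: given $ps\equiv\varepsilon\pmod q$ one takes $\sigma=(1\;2)$, $\varepsilon_1=1$, $\varepsilon_2=\varepsilon$, $\ell=s$ and checks both congruences directly.

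There is no genuine obstacle here; the corollary is essentially bookkeeping for $S_2=\{\mathrm{id},(1\;2)\}$. The only point to be careful about is that the factor $\ell$ can be absorbed into the sign $\varepsilon_1$ (since $(\ell,q)=1$ and $\varepsilon_i\in\{\pm 1\}$), so that the conditions involving $\ell$ simplify to modular identities purely in $p$, $s$, and $\varepsilon$ and the auxiliary data $\ell,\varepsilon_1,\varepsilon_2$ no longer appear in the final statement.
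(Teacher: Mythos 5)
Your proof is correct and is exactly the specialization the paper intends: the corollary is stated there without proof as the immediate form Theorem~\ref{thm:lensspace-or-isometry} takes for $m=2$ with the presentations $\Le(q;1,p)$ and $\Le(q;1,s)$, and your case analysis over $S_2$ together with the observation that $\ell$ is pinned down (mod $q$) by the $i=1$ congruence is the right bookkeeping.
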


\begin{corollary}\label{cor:3dim-or-spin-isometry}
	Let $q$ be even and $\varepsilon\in\{\pm 1\}$. The lens spaces $\Le(q;p;h)$ and $\Le'=\Le(q;s;h')$ are $\varepsilon$-spin-isometric if and only if
	\begin{equation*}
		\varepsilon p \equiv s\pmod {q} \quad  \textrm{and}  \quad h+h'+h_q^p+h_q^s  \equiv \frac{p-\varepsilon s}{q} \pmod 2
	\end{equation*}
	\center{or}
	\begin{equation*}
		\varepsilon p\cdot s\equiv 1 \pmod {q}\quad \textrm{and} \quad h+h'+h_q^p+h_q^s \equiv \frac{p s-\varepsilon}{q} \pmod 2\,.
	\end{equation*}
\end{corollary}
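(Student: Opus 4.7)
My plan is to specialize Theorem~\ref{thm:lensspace-or-spin-isometry} to $m=2$ with $(p_1,p_2)=(1,p)$ and $(s_1,s_2)=(1,s)$, and split into the two possible permutations $\sigma\in S_2$; these will correspond to the two alternative conclusions in the corollary, exactly as in the derivation of Corollary~\ref{cor:3dim-or-isometry} from Theorem~\ref{thm:lensspace-or-isometry}, but now tracking the additional mod-$2$ spin-parity condition throughout.

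For $\sigma=\mathrm{id}$, the congruence $\ell\varepsilon_1\equiv 1\pmod q$ forces $\ell\equiv\varepsilon_1\pmod q$; inserting this into $\ell p\varepsilon_2\equiv s\pmod q$ and using $\varepsilon_1\varepsilon_2=\varepsilon$ gives $\varepsilon p\equiv s\pmod q$. Taking the concrete representative $\ell=\varepsilon_1$, the quantity $\tfrac1q\sum_i(\ell p_{\sigma(i)}\varepsilon_{\sigma(i)}-s_i)$ collapses to $(\varepsilon p-s)/q$, which modulo~$2$ equals $(p-\varepsilon s)/q$ because $\varepsilon=\pm 1$. This is the first alternative. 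For $\sigma=(1\,2)$, the analogous elimination produces $\varepsilon ps\equiv 1\pmod q$ and, with $\ell=s\varepsilon_1$, a spin-parity condition that reduces to $(ps-\varepsilon)/q\pmod 2$, which is the second alternative. Both directions of the equivalence are already built into Theorem~\ref{thm:lensspace-or-spin-isometry}, so the reverse implications require no additional work beyond running the above computations backwards.

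The one piece of bookkeeping I expect to be the main (mild) obstacle is verifying that $\tfrac1q\sum_i(\ell p_{\sigma(i)}\varepsilon_{\sigma(i)}-s_i)\pmod 2$ is independent of the choice of integer representative of $\ell$. Shifting $\ell\mapsto\ell+q$ changes this sum by $\varepsilon_1+p\varepsilon_2$ in the identity case and by $p\varepsilon_2+\varepsilon_1$ in the transposed case, both of which are even because $p$ is odd (as $q$ is even and $(p,q)=1$). This well-definedness licences the choice of convenient representatives used above and ensures the stated congruences faithfully capture the theorem's conclusion.
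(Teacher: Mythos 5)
Your proposal is correct and coincides with the paper's (implicit) argument: the corollary is obtained there exactly as a specialization of Theorem~\ref{thm:lensspace-or-spin-isometry} to $m=2$, $(p_1,p_2)=(1,p)$, $(s_1,s_2)=(1,s)$, with the two permutations in $S_2$ yielding the two alternatives. Your extra check that the parity of $\tfrac1q\sum_i(\ell p_{\sigma(i)}\varepsilon_{\sigma(i)}-s_i)$ is independent of the integer representative of $\ell$ (using that $p$ is odd when $q$ is even) is exactly the bookkeeping needed to justify the convenient choices $\ell=\varepsilon_1$ and $\ell=s\varepsilon_1$.
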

To facilitate the understanding of Corollaries~\ref{cor:3dim-or-isometry} and \ref{cor:3dim-or-spin-isometry}, we provide the following
\begin{example}
	 The lens space $\Le(7;2)$ is $+1$-spin-isometric to $\Le(7;4)$ because $2\cdot4\equiv 1\pmod 7$ whereas $\Le(17;4)$ is $-1$-spin-isometric to itself because $4^2\equiv -1\pmod {17}$. In particular, $\Le(17;4)$ has symmetric spectrum.	 
	 Analogously, $\Le(8;3;0)$ is $+1$-spin-isometric to $\Le(8;3;1)$ because $3^2\equiv 1\pmod 8$ and $0+1+0+0\equiv\tfrac{3\cdot3-1}{8}\pmod 2$ and $\Le(10;3;0)$ is $-1$-spin-isometric to $\Le(10;3;1)$. Note that there is a $-1$-spin-isometry of $\Le(q;p;h)$, $h\in\{0,1\}$, if and only if $p^2\equiv -1\pmod q$ and $\tfrac{p^2+1}{q}$ is even, which is never the case.
\end{example}
At last, we cite a theorem that characterises those lens space, which are Riemannian homogeneous.

\begin{theorem}[\cite{pre05830219}, Corollary 2.7.2]
	The lens space $\Le(q;p_1,\ldots,p_m)$ is homogeneous if and only if $p_i\equiv\pm p_j\pmod q$ for all $1\leq i<j\leq m$. In particular, two homogeneous lens spaces of the same dimension and volume are isometric.
\end{theorem}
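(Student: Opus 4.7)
The plan is to identify the isometry group of $\Le := \Le(q;p_1,\ldots,p_m)$ with $N/\Gamma$, where $\Gamma := \langle\gamma_q^p\rangle$ and $N := N_{\Ot(2m)}(\Gamma)$, and to characterise homogeneity of $\Le$ in terms of transitivity of (the identity component of) $N$ on the sphere $S^{2m-1}$.

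For the direction $(\Leftarrow)$, suppose $p_i\equiv\varepsilon_i p_1\pmod q$ for some signs $\varepsilon_i\in\{\pm 1\}$. Two applications of Theorem~\ref{thm:lensspace-reg-isometry} (first with $\ell=1$, $\sigma=\mathrm{id}$ and the given $\varepsilon_i$, then with $\ell=p_1^*$) show that $\Le$ is isometric to $\Le(q;1,\ldots,1)=S^{2m-1}/\langle\xi_q\Id\rangle$, where $\Z/q\Z$ acts on $\C^m$ by scalar multiplication. Since $\Un(m)$ commutes with this scalar action, its transitive action on $S^{2m-1}$ descends to a transitive action on the quotient, so $\Le$ is Riemannian homogeneous.

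For the direction $(\Rightarrow)$, assume $\Le$ is homogeneous. Then $\mathrm{Isom}^0(\Le)$ acts transitively on the connected compact manifold $\Le$, and lifting via the finite covering $N\to N/\Gamma$ (using that $N^0$ is locally transitive and $S^{2m-1}$ is connected) yields transitivity of $N^0$ on $S^{2m-1}$. Since $\mathrm{Aut}(\Gamma)$ is discrete, $N^0$ centralises $\Gamma$, so the full centraliser $C := C_{\SO(2m)}(\gamma_q^p)$ acts transitively on $S^{2m-1}$. Identifying $\R^{2m}\cong\C^m$ so that $\gamma_q^p$ becomes the complex diagonal matrix $D = \diag(\xi_q^{p_1},\ldots,\xi_q^{p_m})$, decompose every $A\in C$ as $A = A_+ + A_-$ with $A_+$ complex linear and $A_-$ complex antilinear. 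A short direct calculation gives $(A_+)_{kj}=0$ whenever $p_k\not\equiv p_j\pmod q$, and $(A_-)_{kj}=0$ whenever $p_k\not\equiv -p_j\pmod q$. Consequently the $C$-orbit of $e_j$ lies in the $\R$-span of $\{e_k : p_k\equiv\pm p_j\pmod q\}$, and transitivity forces this span to equal $\R^{2m}$, i.e., $p_k\equiv\pm p_j\pmod q$ for all $j,k$.

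Finally, two homogeneous lens spaces of the same dimension $2m-1$ and volume $\mathrm{vol}(S^{2m-1})/q$ share the same $m$ and $q$, so by $(\Leftarrow)$ both are isometric to $\Le(q;1,\ldots,1)$ and hence to each other. The main delicate point is the contribution of the $\C$-antilinear part of $C$: omitting $A_-$ would produce only the stronger (false) conclusion $p_k\equiv p_j\pmod q$, so it is precisely $A_-$ that accounts for the sign ambiguity in the statement.
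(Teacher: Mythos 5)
The paper does not prove this statement at all --- it is quoted verbatim from \cite{pre05830219}, Corollary 2.7.2, so there is no in-text argument to compare yours against. Judged on its own, your proof is correct and complete in all essentials. The reduction of $(\Leftarrow)$ to $\Le(q;1,\ldots,1)$ via Theorem~\ref{thm:lensspace-reg-isometry} and the transitive $\Un(m)$-action commuting with the scalar $\Z/q\Z$-action is exactly right. For $(\Rightarrow)$, the chain ``homogeneous $\Rightarrow$ $\mathrm{Isom}^0$ transitive $\Rightarrow$ $N^0$ transitive upstairs $\Rightarrow$ $N^0$ centralises $\Gamma$ (discreteness of $\mathrm{Aut}(\Gamma)$) $\Rightarrow$ $C_{\SO(2m)}(\gamma_q^p)$ transitive'' is sound: the lift works because $S^{2m-1}$ is simply connected for $m\ge 2$ (the cases $m=1$ and $q\le 2$ being vacuous), and a finite union of $N^0$-orbits covering the connected sphere forces one, hence all, to be open and therefore everything. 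Your identification of the crux is also the right one: the centraliser of $\diag(\xi_q^{p_1},\ldots,\xi_q^{p_m})$ in $\GL(2m,\R)$ exceeds its centraliser in $\GL(m,\C)$ precisely through the antilinear blocks, and these exist exactly where $p_k\equiv -p_j\pmod q$; dropping them would indeed yield the false conclusion $p_k\equiv p_j$. One small imprecision: the orbit of $e_j$ lies in the \emph{complex} span of $\{e_k : p_k\equiv\pm p_j\pmod q\}$ (equivalently, the real span of the corresponding coordinate $2$-planes in $\R^{2m}$), not the real span of the $e_k$ alone; this does not affect the argument, since that subspace is still proper whenever some $p_k\not\equiv\pm p_j$, contradicting transitivity. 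The final volume count $\operatorname{vol}(\Le)=\operatorname{vol}(S^{2m-1})/q$ pinning down $q$ and hence the isometry class is correct.
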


\section{The Spectrum of the Sphere and its Quotients}\label{sec:spectra}

In this section, we describe the spectrum of the Dirac operator on the sphere and its quotients, the spherical space forms. We then specialise the formulas to lens spaces and end the section with a theorem about the spectral rigidity of homogeneous lens spaces.

The spectrum of the Dirac operator on the three dimensional sphere was first calculated in \cite{Hitchin}, though the round metric was only one among a large class of metrics for which Hitchin calculated the spectrum. In \cite{Sulanke1979}, S. Sulanke calculated the spectrum of $S^n$ for all $n\geq2$ by a representation theoretic approach. C. B\"ar found an alternative and shorter method to calculate the spectrum in \cite{0748.53022} (cf.\ \cite{0848.58046}) using Killing spinors, which also paved the way for the description of the spectrum on spherical space forms. It is this approach which we will follow and use in this and the following sections.

\begin{theorem}[\cite{Hitchin}, \cite{Sulanke1979}, \cite{0848.58046}]
	\label{thm:spherespectrum}
	The eigenvalues of the Dirac operator on the round sphere $S^n$ are
	$$
	\pm \left(\frac n2 +k\right),\; k\in\N_0
	$$
	with corresponding multiplicities
	$$
	\mult_{S^n}\left(\pm\left(\frac n2 +k\right)\right) = 2^{[\frac{n}{2}]}\binom{n+k-1}{k}\,.
	$$
\end{theorem}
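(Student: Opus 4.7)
The plan is to follow B\"ar's Killing-spinor approach from \cite{0848.58046}. First I would recall that $S^n$ carries $2^{[n/2]}$ linearly independent real Killing spinors of Killing number $-\tfrac12$ and equally many of Killing number $+\tfrac12$, obtained by restricting parallel spinors on $\R^{n+1}$ to the unit sphere. Applying $D\psi=-\sum_i e_i\cdot\nabla_{e_i}\psi$ to a Killing spinor with Killing number $\mp\tfrac12$ immediately yields an eigenspinor of $D$ with eigenvalue $\pm n/2$, handling the base case $k=0$ and accounting for the prefactor $2^{[n/2]}$.

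Next, to reach the higher eigenvalues, I would produce further eigenspinors by multiplying Killing spinors with suitable restrictions of polynomials on $\R^{n+1}$ and correcting by a twistor term. Using $D(f\psi)=\mathrm{grad}(f)\cdot\psi+fD\psi$ together with the Killing equation, one checks that for each homogeneous harmonic polynomial $P$ of degree $k$ and each Killing spinor $\psi$ of Killing number $-\tfrac12$, a spinor of the shape $P\cdot\psi+(\text{first order correction})$ gives an eigenspinor of $D$ with eigenvalue $n/2+k$; the parallel construction with Killing number $+\tfrac12$ gives eigenvalue $-(n/2+k)$. Tracking dimensions of the polynomial spaces used and combining with the $2^{[n/2]}$ Killing spinors, these eigenspinors span a space of dimension at least $2^{[n/2]}\binom{n+k-1}{k}$.

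Completeness is the delicate point and I would handle it representation-theoretically. Writing $S^n=\Spin(n+1)/\Spin(n)$, Peter--Weyl and Frobenius reciprocity decompose $L^2(S^n,\Sigma S^n)$ into $\Spin(n+1)$-irreducibles, with multiplicities given by the multiplicity of the spin representation of $\Spin(n)$ in the restriction of each $\Spin(n+1)$-irreducible. Identifying the surviving irreducibles as those arising from the Killing-spinor construction (since $D$ commutes with the $\Spin(n+1)$-action, each such irreducible sits inside a single eigenspace) matches the lower bound and forbids any further eigenvalues.

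The main obstacle is the bookkeeping in the multiplicity count: establishing linear independence of the produced eigenspinors across varying $P$ and $\psi$, and confirming that the Frobenius reciprocity computation yields exactly $2^{[n/2]}\binom{n+k-1}{k}$. Doing the construction and the representation-theoretic completeness argument together, so that the lower bound from Killing spinors and the upper bound from branching rules meet, is the cleanest route and the one I would take.
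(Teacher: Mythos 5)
This theorem is not proved in the paper at all: it is quoted with attributions to Hitchin, Sulanke and B\"ar, and the surrounding text explicitly describes the two known proofs (Sulanke's representation-theoretic computation and B\"ar's Killing-spinor construction). Your proposal is essentially a hybrid of exactly those two cited arguments, so it is the ``same approach'' in the only sense available, and its overall architecture --- explicit eigenspinors from Killing spinors and harmonic polynomials for existence, Peter--Weyl/Frobenius reciprocity for completeness --- is sound. One concrete warning about the bookkeeping you rightly identify as the crux: the space of degree-$k$ harmonic polynomials on $\R^{n+1}$ has dimension $\binom{n+k}{k}-\binom{n+k-2}{k-2}=\binom{n+k-1}{k}+\binom{n+k-2}{k-1}$, which strictly exceeds $\binom{n+k-1}{k}$ for $k\ge1$; so the assignment $(P,\psi)\mapsto P\psi+(\text{correction})$ cannot be injective onto the $\pm(\tfrac n2+k)$-eigenspace, and the honest statement is that each pair $(P,\psi)$ spans with $\operatorname{grad}(P)\cdot\psi$ a two-dimensional $D$-invariant subspace whose eigenvalues are $\tfrac n2+k$ \emph{and} $-(\tfrac n2+k-1)$. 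The eigenspace for $\tfrac n2+k$ therefore receives contributions from two adjacent polynomial degrees and two Killing numbers, and the claimed multiplicity only emerges after the branching-rule upper bound forces the inductive count to close up. As long as you carry out that two-sided squeeze (and identify, via the $\Spin(n+1)$-equivariance of $D$, which sign of eigenvalue each isotypic component lands in), the argument is complete and matches the cited literature.
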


We now pass to spherical space forms $\Gamma\bs S^{2m-1}$, where $\Gamma\subseteq\SO(2m)$ is a finite and freely acting group of orientation preserving isometries of $S^{2m-1}$. Restricting to odd dimensions is no loss of generality since the only spherical space forms in even dimensions are the sphere $S^{2m}$ itself and real projective space $\mathbb{PR}^{2m}$, which is not orientable and, in particular, not spin. Suppose that the quotient $\Gamma\bs S^{2m-1}$ is spin, and let a spin structure be given by $\tau:\Gamma\to\Spin(2m)$. The spinor fields on $\Gamma\bs S^{2m-1}$ can be identified with the $\Gamma$-invariant spinor fields on $S^{2m-1}$ by a unitary isomorphism (see \cite[Proposition 1.4.2]{1186.58020}). In particular, the eigenspinor fields of the Dirac operator on $\Gamma\bs S^{2m-1}$ can be identified with the $\Gamma$-invariant eigenspinor fields of the Dirac operator on $S^{2m-1}$. It follows that the eigenvalues of the Dirac operator on $\Gamma\bs S^{2m-1}$ are $\pm \left(\tfrac {2m-1}2+k \right)$, $k\ge 0$, with corresponding multiplicities 
$$
0\le \mult_{(\Gamma\bs S^{2m-1},\tau)}\left(\pm\left(\tfrac {2m-1}2 +k\right)\right) \le \mult_{ S^{2m-1}}\left(\pm\left(\tfrac {2m-1}2 +k\right)\right)\,.
$$
We weave these multiplicities into two power series.

\begin{definition}
	Let $\Gamma\bs S^{2m-1}$ be a spherical space form equipped with a spin structure $\tau:\Gamma\to\Spin(2m)$. The generating functions of (the spectrum of the Dirac operator on) $\Gamma\bs S^{2m-1}$ are
	$$
	F_\pm^{(\Gamma\bs S^{2m-1},\tau)}(z):=\sum_{k=0}^{\infty} \mult_{(\Gamma\bs S^{2m-1},\tau)}\left(\pm\left(\frac {2m-1}2+k\right)\right)z^k \,.
	$$
\end{definition}

Using the multiplicities on $S^{2m-1}$, a standard argument shows that these power series converge absolutely for $|z|<1$. In accordance with Definition~\ref{def:spin-isometry} we now make the
\begin{definition}
	Let $M$ and $N$ be compact Riemannian spin manifolds. Then $M$ and $N$ are \textit{$+1$-isospectral} if the spectra of their Dirac operators $D_M$ and $D_N$ coincide, where each eigenvalue is counted with its multiplicity. The Riemannian spin manifolds $M$ and $N$ are \textit{$-1$-isospectral} if the following condition is met: $\lambda\in\R$ is an eigenvalue of $D_M$ with multiplicity $m=\mult_M(\lambda)$ if and only if $-\lambda$ is an eigenvalue of $D_N$ with multiplicity $m=\mult_N(\lambda)$. The manifolds $M$ and $N$ are \textit{isospectral} if they are $\varepsilon$-isospectral for some $\varepsilon\in\{\pm1\}$.
\end{definition}

\begin{proposition}\label{prop:sphericalspaceforms-isospectrality-cond}
	Let $\Gamma\bs S^{2m-1}$ and $\Gamma'\bs S^{2m-1}$ be spherical space forms with spin structures $\tau:\Gamma\to\Spin(2m)$ and $\tau':\Gamma'\to\Spin(2m)$, respectively. Then $\Gamma\bs S^{2m-1}$ and $\Gamma'\bs S^{2m-1}$ are $(+1)$-isospectral if and only if $F_\pm^{(\Gamma\bs S^{2m-1},\tau)}=F_\pm^{(\Gamma'\bs S^{2m-1},\tau')}$ and $(-1)$-isospectral if and only if $F_\pm^{(\Gamma\bs S^{2m-1},\tau)}=F_\mp^{(\Gamma'\bs S^{2m-1},\tau')}$.
\end{proposition}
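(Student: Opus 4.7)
The proposition is essentially a bookkeeping statement, since the generating functions were designed precisely to record the sequence of multiplicities of all eigenvalues that can possibly occur. I will argue both directions by turning each condition into an equality of the sequences of coefficients.

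The plan is to start from the preceding observation: for any spherical space form $\Gamma\bs S^{2m-1}$ with spin structure $\tau$, the spectrum of the Dirac operator is a subset of $\bigl\{\pm\bigl(\tfrac{2m-1}{2}+k\bigr) : k\in\mathbb{N}_0\bigr\}$, obtained via the unitary identification of spinor fields on $\Gamma\bs S^{2m-1}$ with $\Gamma$-invariant spinor fields on $S^{2m-1}$. Since all these potential eigenvalues are pairwise distinct, the spectrum (with multiplicities) is completely encoded by the pair of nonnegative integer sequences
\[
\bigl(\mult_{(\Gamma\bs S^{2m-1},\tau)}\bigl(+(\tfrac{2m-1}{2}+k)\bigr)\bigr)_{k\ge 0},\qquad \bigl(\mult_{(\Gamma\bs S^{2m-1},\tau)}\bigl(-(\tfrac{2m-1}{2}+k)\bigr)\bigr)_{k\ge 0},
\]
which are exactly the coefficient sequences of $F_+^{(\Gamma\bs S^{2m-1},\tau)}$ and $F_-^{(\Gamma\bs S^{2m-1},\tau)}$, respectively.

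For the $+1$-case, I would observe that $\Gamma\bs S^{2m-1}$ and $\Gamma'\bs S^{2m-1}$ are $(+1)$-isospectral if and only if $\mult(\lambda)$ agrees for the two operators at every $\lambda\in\mathbb{R}$. Since both spectra are contained in the set above, this reduces to the coefficient-wise identities
\[
\mult_{(\Gamma\bs S^{2m-1},\tau)}\bigl(\pm(\tfrac{2m-1}{2}+k)\bigr)=\mult_{(\Gamma'\bs S^{2m-1},\tau')}\bigl(\pm(\tfrac{2m-1}{2}+k)\bigr)\qquad(k\ge 0),
\]
which is equivalent to equality of the power series $F_\pm^{(\Gamma\bs S^{2m-1},\tau)}=F_\pm^{(\Gamma'\bs S^{2m-1},\tau')}$ as formal series, hence (by uniqueness of power series coefficients of a holomorphic function on $|z|<1$) as analytic functions. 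For the $-1$-case, the definition of $(-1)$-isospectrality directly translates to
\[
\mult_{(\Gamma\bs S^{2m-1},\tau)}\bigl(\pm(\tfrac{2m-1}{2}+k)\bigr)=\mult_{(\Gamma'\bs S^{2m-1},\tau')}\bigl(\mp(\tfrac{2m-1}{2}+k)\bigr)\qquad(k\ge 0),
\]
giving $F_\pm^{(\Gamma\bs S^{2m-1},\tau)}=F_\mp^{(\Gamma'\bs S^{2m-1},\tau')}$.

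There is no real obstacle here; the only things to be careful about are to explicitly invoke that no other eigenvalues can occur (so that equality on this arithmetic progression of allowed values really characterises the whole spectrum) and to note that the coefficient sequences of the generating functions determine them as meromorphic/analytic functions via absolute convergence on the unit disk, so the statement can be phrased either at the level of power series or at the level of functions.
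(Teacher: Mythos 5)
Your proof is correct and fills in the (omitted) argument the paper treats as immediate from the definitions of the generating functions and of $\varepsilon$-isospectrality; it is the same bookkeeping the paper has in mind, just spelled out.
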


For the following Theorem, denote by $\chi^\pm:\Spin(2m)\to\C$ the positive and negative half-spin characters, respectively.

\begin{theorem}[{\cite[Theorem 2]{0848.58046}}]\label{thm:mero-ext-gen-func}
	Let $\Gamma\bs S^{2m-1}$ be a spherical space form equipped with a spin structure $\tau:\Gamma\to\Spin(2m)$. Then the eigenvalues of the Dirac operator are $\pm\left(\tfrac {2m-1}2+k\right),\, k\geq0$, with multiplicities determined by
	\begin{equation}\label{eq:mero-ext-gen-func}
		F_\pm^{(\Gamma\bs S^{2m-1},\tau)}(z)=\frac 1{|\Gamma|}\sum_{\gamma\in\Gamma}\frac{\chi^\mp(\tau(\gamma))-z\cdot\chi^\pm(\tau(\gamma))}{\det\left(\Id-z\cdot\gamma\right)}\,.
	\end{equation}
\end{theorem}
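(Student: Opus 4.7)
The strategy is to compute the multiplicities on the quotient via character theory of $\Gamma$ acting on the eigenspaces upstairs, and then collapse the resulting power series in closed form using the classical generating function for symmetric powers. The first step is to identify each Dirac eigenspace $E_k^\pm$ on $S^{2m-1}$, with eigenvalue $\pm(\tfrac{2m-1}{2}+k)$, as a representation of $\Spin(2m)$. Via B\"ar's realisation using Killing spinors and polynomial spinors on the cone $\R^{2m}$, there is a short exact sequence of $\Spin(2m)$-modules
$$
0\longrightarrow \mathcal{P}_{k-1}\otimes\Sigma^{\pm}\xrightarrow{\ c\ }\mathcal{P}_{k}\otimes\Sigma^{\mp}\longrightarrow E_k^\pm\longrightarrow 0\,,
$$
where $\mathcal{P}_j$ denotes degree-$j$ homogeneous polynomials on $\R^{2m}$ (acted upon by $\Spin(2m)$ through $\Theta$) and $c$ is Clifford multiplication by the position vector, $p\otimes\sigma\mapsto \sum_j x_j p\otimes e_j\sigma$. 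Equivariance of $c$ is a short Clifford-algebra computation using $g e_i g^{-1}=\sum_j \Theta(g)_{ji}\,e_j$, and injectivity follows from the existence of a left inverse (up to a non-zero scalar on each graded piece) built from the Euclidean Dirac operator on $\R^{2m}$. Passing to characters yields
$$
\chi_{E_k^\pm}(g)=\chi^{\mp}(g)\,\chi_{\mathcal{P}_k}(g)-\chi^{\pm}(g)\,\chi_{\mathcal{P}_{k-1}}(g)\qquad (g\in\Spin(2m)),
$$
with the convention $\chi_{\mathcal{P}_{-1}}\equiv 0$.

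The second step is standard character theory. Because spinor fields on $(\Gamma\bs S^{2m-1},\tau)$ correspond precisely to $\tau(\Gamma)$-invariant spinor fields on $S^{2m-1}$, the multiplicity downstairs equals the dimension of the $\tau(\Gamma)$-fixed subspace of $E_k^\pm$, so by the projection formula for finite groups
$$
\mult_{(\Gamma\bs S^{2m-1},\tau)}\!\left(\pm(\tfrac{2m-1}{2}+k)\right)=\dim\bigl(E_k^\pm\bigr)^{\tau(\Gamma)}=\frac{1}{|\Gamma|}\sum_{\gamma\in\Gamma}\chi_{E_k^\pm}(\tau(\gamma))\,.
$$

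The third step assembles everything. For $|z|<1$, the series $\sum_{k\geq 0}\chi_{E_k^\pm}(\tau(\gamma))\,z^k$ converges absolutely since its coefficients are dominated by the sphere multiplicities of Theorem~\ref{thm:spherespectrum}, so the finite $\Gamma$-sum can be swapped with it. Substituting the character identity from Step~1 and applying the classical generating formula
$$
\sum_{k=0}^{\infty}\chi_{\mathcal{P}_k}(g)\,z^k=\frac{1}{\det(\Id-z\,\Theta(g))}\,,
$$
the inner series telescopes to $\bigl(\chi^\mp(\tau(\gamma))-z\,\chi^\pm(\tau(\gamma))\bigr)/\det(\Id-z\gamma)$, and averaging over $\Gamma$ produces the formula of the theorem. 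The right-hand side is a rational function of $z$, which supplies the meromorphic extension to $\C$; its poles lie among the reciprocals of the eigenvalues of elements of $\Gamma$, all roots of unity distinct from $1$ when $\gamma\neq\Id$ by freeness of the action.

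The main obstacle is Step~1, namely establishing the exact sequence (equivalently, the character identity) that identifies $E_k^\pm$ as a specific $\Spin(2m)$-module. This requires a careful analysis of the Euclidean Dirac operator on homogeneous polynomial spinors and of the correspondence, via the Killing spinor equation, between such polynomial spinors and eigenspinors of the sphere Dirac operator of the correct eigenvalue and chirality --- essentially Sulanke's and B\"ar's determination of the sphere spectrum. Once this algebraic model for the eigenspaces is in hand, the remainder of the proof is purely formal manipulation with characters and geometric series.
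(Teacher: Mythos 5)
The paper does not prove this theorem itself --- it is quoted verbatim from B\"ar \cite{0848.58046} --- and your argument is a faithful reconstruction of B\"ar's proof: identify the sphere eigenspaces as $\Spin(2m)$-modules, average characters over $\tau(\Gamma)$ to get the quotient multiplicities, and sum the series via $\sum_k\chi_{\mathcal{P}_k}(g)z^k=\det(\Id-z\Theta(g))^{-1}$. Your character identity $\chi_{E_k^\pm}=\chi^\mp\chi_{\mathcal{P}_k}-\chi^\pm\chi_{\mathcal{P}_{k-1}}$ is consistent both with the sphere multiplicities of Theorem~\ref{thm:spherespectrum} and with the constant term of \eqref{eq:mero-ext-gen-func}, and the one genuinely hard ingredient --- the exactness of your sequence, i.e.\ the Killing-spinor description of the eigenspaces --- is exactly the content of the cited sphere-spectrum computations, which you correctly flag rather than claim to reprove.
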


Denote by $\xi := \xi_n$ the $n$-th root of unity $e^{{2\pi i}/n}$. Then, for lens spaces, Theorem~\ref{thm:mero-ext-gen-func} takes the following form.
\begin{corollary}\label{cor:generating_functions_lens_spaces}
	Let $\Le=\Le(q;p_1,\ldots,p_m)$. If $q$ is odd, the generating functions of $\Le$ are
	\begin{align}
		\begin{split}\label{eqn:ls-q_odd-mero-cont_0}
			F_+^{(\Le,\tau)} (z)= \frac{2^{m-1}}{q}\sum_{k=0}^{q-1}\frac{\displaystyle\sum_{\epsilon_1\cdots\epsilon_m=(-1)^{m+1}}\xi_{2q}^{(q+1)k\sum_j\epsilon_j p_j} -z\cdot \!\!\!\!\!\!\!\sum_{\epsilon_1\cdots\epsilon_m=(-1)^{m}}\xi_{2q}^{(q+1)k\sum_j\epsilon_j p_j}}{\displaystyle\prod_{j=1}^m (\xi_q^{k p_j}-z)(\xi_q^{- k p_j}-z)}\,,
		\end{split}\\
		\begin{split}\label{eqn:ls-q_odd-mero-cont_1}
			F_-^{(\Le,\tau)} (z)= \frac{2^{m-1}}{q}\sum_{k=0}^{q-1}\frac{\displaystyle\sum_{\epsilon_1\cdots\epsilon_m=(-1)^{m}}\xi_{2q}^{(q+1)k\sum_j\epsilon_j p_j} -z\cdot \!\!\!\!\!\!\!\sum_{\epsilon_1\cdots\epsilon_m=(-1)^{m+1}}\xi_{2q}^{(q+1)k\sum_j\epsilon_j p_j}}{\displaystyle\prod_{j=1}^m (\xi_q^{k p_j}-z)(\xi_q^{- k p_j}-z)}\,.
		\end{split}
	\end{align}
	If $q$ and $m$ are even and $\Le$ is equipped with the spin structure $\tau_h$, then the generating functions of $\Le$ are
	\begin{align}
		\begin{split}\label{eqn:ls-q_even-mero-cont_0}
		F_+^{(\Le,\tau_h)} (z)=\frac{2^{m-1}}{q}\sum_{k=0}^{q-1}(-1)^{k(h+h_q^p)}\,\frac{\displaystyle\sum_{\epsilon_1\cdots\epsilon_m=(-1)^{m+1}}\xi_{2q}^{k \sum_j\epsilon_j p_j} -z\cdot \!\!\!\!\!\!\!\sum_{\epsilon_1\cdots\epsilon_m=(-1)^{m}}\xi_{2q}^{k \sum_j\epsilon_j p_j}}{\displaystyle\prod_{j=1}^m (\xi_q^{k p_j}-z)(\xi_q^{-k p_j}-z)}\,,
		\end{split}\\
		\begin{split}\label{eqn:ls-q_even-mero-cont_1}
			F_-^{(\Le,\tau_h)} (z)=\frac{2^{m-1}}{q}\sum_{k=0}^{q-1}(-1)^{k(h+h_q^p)}\,\frac{\displaystyle\sum_{\epsilon_1\cdots\epsilon_m=(-1)^{m}}\xi_{2q}^{k \sum_j\epsilon_j p_j} -z\cdot \!\!\!\!\!\!\!\sum_{\epsilon_1\cdots\epsilon_m=(-1)^{m+1}}\xi_{2q}^{k \sum_j\epsilon_j p_j}}{\displaystyle\prod_{j=1}^m (\xi_q^{k p_j}-z)(\xi_q^{-k p_j}-z)}\,.
		\end{split}
	\end{align}
\end{corollary}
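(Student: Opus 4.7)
The four formulas will follow by specializing Theorem~\ref{thm:mero-ext-gen-func} to $\Gamma=\langle\gamma_q^p\rangle$ with the spin structures $\tau$ (odd $q$) and $\tau_h$ (even $q$ and $m$) described in Proposition~\ref{prop:lensspace-spin-str}. For each $k\in\{0,\dots,q-1\}$ I need to evaluate three things: the denominator $\det(\Id-z(\gamma_q^p)^k)$, the half-spin characters $\chi^{\pm}(\tau((\gamma_q^p)^k))$, and, in the even case, the scalar $(-1)^{k(h+h_q^p)}$ by which $\tau_h$ differs from the canonical lift.

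For the denominator, $(\gamma_q^p)^k=\gamma_q^{kp}$ is block diagonal with $m$ planar rotations through angles $2kp_j\pi/q$, so
$$
\det(\Id-z(\gamma_q^p)^k)=\prod_{j=1}^m\bigl(1-2z\cos(2kp_j\pi/q)+z^2\bigr)=\prod_{j=1}^m(\xi_q^{kp_j}-z)(\xi_q^{-kp_j}-z),
$$
which is the common denominator appearing in all four formulas.

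For the characters, note that on the spinor module $\Delta$ the elements $e_{2j-1}e_{2j}\in\mathrm{Cl}(2m)$ pairwise commute and each has eigenvalues $\pm i$. Hence $\Delta$ admits a simultaneous eigenbasis $\{v_\epsilon\}_{\epsilon\in\{\pm1\}^m}$, on which an element of the form $g=\prod_j\exp(\phi_j e_{2j-1}e_{2j})\in\Spin(2m)$ acts diagonally by $\prod_j e^{i\epsilon_j\phi_j}$, while the chirality $\omega=e_1e_2\cdots e_{2m}$ acts by $\prod_j\epsilon_j$ up to a fixed power of $i$. The half-spin splitting $\Delta=\Delta^+\oplus\Delta^-$ therefore groups the $v_\epsilon$ by the value of $\prod_j\epsilon_j$, so with the convention dictated by Theorem~\ref{thm:mero-ext-gen-func} one has
$$
\chi^{\pm}(g)=\sum_{\epsilon_1\cdots\epsilon_m=\pm(-1)^m}\prod_{j=1}^m e^{i\epsilon_j\phi_j}.
$$
For odd $q$ Proposition~\ref{prop:lensspace-spin-str} gives $\phi_j=k(q+1)p_j\pi/q$, whence $e^{i\epsilon_j\phi_j}=\xi_{2q}^{(q+1)k\epsilon_jp_j}$ and each character becomes a sum of terms $\xi_{2q}^{(q+1)k\sum_j\epsilon_jp_j}$; for even $q$ one has $\phi_j=kp_j\pi/q$, producing terms $\xi_{2q}^{k\sum_j\epsilon_jp_j}$.

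Finally, for even $q$ the lift $\tau_h((\gamma_q^p)^k)$ differs from the above $g$ by the scalar $(-1)^{k(h+h_q^p)}$; since $-1\in\Spin(2m)$ acts on $\Delta$ as $-\Id$, this sign pulls linearly out of each character and supplies the extra prefactor in \eqref{eqn:ls-q_even-mero-cont_0}--\eqref{eqn:ls-q_even-mero-cont_1}. Substituting the three ingredients into Theorem~\ref{thm:mero-ext-gen-func} and collecting terms yields the four displayed formulas. The main obstacle is purely bookkeeping: fixing the chirality convention so that the two parity classes $\prod_j\epsilon_j=\pm(-1)^m$ correspond respectively to $\chi^{\pm}$, and then tracking which of $(-1)^m$ and $(-1)^{m+1}$ indexes the constant-term sum versus the $z$-coefficient in each of the four formulas; this can be pinned down once and for all by evaluating $\chi^{\pm}$ at the identity and comparing with $\dim\Delta^{\pm}=2^{m-1}$.
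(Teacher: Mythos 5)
Your approach coincides with the paper's: the published proof consists of a single citation to \cite{MR781344} for the values of $\chi^\pm$ on elements of the form $\prod_j\exp(\phi_j e_{2j-1}e_{2j})$, after which one substitutes into Theorem~\ref{thm:mero-ext-gen-func} exactly as you describe. Your evaluation of the denominator, your derivation of the weights $\prod_j e^{i\epsilon_j\phi_j}$ from the commuting elements $e_{2j-1}e_{2j}$ with eigenvalues $\pm i$, and your observation that the central sign $(-1)^{k(h+h_q^p)}$ pulls linearly out of both characters are all correct. The one genuine defect is your proposed method for fixing the chirality convention: evaluating $\chi^\pm$ at the identity gives $2^{m-1}$ for \emph{both} parity classes $\prod_j\epsilon_j=\pm(-1)^m$, since each class contains exactly $2^{m-1}$ sign vectors, so comparison with $\dim\Delta^\pm=2^{m-1}$ cannot tell you which class carries $\chi^+$ and which carries $\chi^-$. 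That labelling is exactly the nontrivial bookkeeping you set out to resolve --- it decides which of $F_+$, $F_-$ records the positive spectrum, on which every $\varepsilon$-statement later in the paper depends --- and it has to be pinned down by the eigenvalue of the volume element $i^m e_1\cdots e_{2m}$ on $v_\epsilon$ (equivalently by the highest weights of $\Delta^\pm$, which is what the cited reference supplies), or by matching a low-lying multiplicity of one explicit lens space computed by independent means.

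A second point you should be aware of: carried out literally, your substitution produces the prefactor $1/q$, not $2^{m-1}/q$, because each character is already an unnormalized sum of $2^{m-1}$ exponentials (it must be, since Theorem~\ref{thm:mero-ext-gen-func} applied to $\Gamma=\{1\}$ forces $\chi^\pm(1)=2^{m-1}$). So your claim that the substitution ``yields the four displayed formulas'' is not literally true of the formulas as printed. The discrepancy is not yours: the $m=2$ specialisation at the beginning of Section~\ref{sec:three-dim}, where the two-term $\epsilon$-sum has been rewritten as $2\cos(\cdot)$ under an overall $2/q$, and the value $F_+^{(L,\tau)}(0)=2$ used in the proof of Theorem~\ref{thm:homo-lensspace-rigidity}, both correspond to the $1/q$ normalisation, so the extra $2^{m-1}$ in the displayed corollary is a typo that your computation correctly fails to reproduce. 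It would strengthen your write-up to say so explicitly rather than to assert agreement with the printed constant.
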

\begin{proof}
	The values of the half-spin characters on the image of the homomorphisms inducing the spin structures can be found on \cite[p. 290]{MR781344}.
\end{proof}

\begin{remark}
	Heat kernel methods for the Dirac Laplacian $D^2$ on an arbitrary compact spin manifold show that the dimension as well as the volume of $M$ are spectrally determined (see \cite{1037.58015}). Thus, for a lens space $\Le(q;p_1,\ldots,p_m)$, $m$ and $q$ are spectrally determined. 
\end{remark}

The first application of the preceding corollary is the following spectral rigidity result for homogeneous lens spaces.

\begin{theorem}\label{thm:homo-lensspace-rigidity}
	Let $\Le=\Le(q;p_1,\ldots,p_m)$ and $\Le'=\Le(q;s_1,\ldots,s_m)$ be lens spaces with fixed spin structures. Let $\Le$ is homogeneous.
	If $\Le'$ is isospectral to $\Le$, then $\Le'$ is homogeneous as well and so, in particular, isometric to $\Le$. Moreover, if $m = 2$, $q > 2$, and $\Le'$ is $\varepsilon$-isospectral to $\Le$, then $\Le'$ and $\Le$ are $\varepsilon$-spin-isometric and carry the same spin structure.
\end{theorem}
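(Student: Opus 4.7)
The plan is to adapt Ikeda and Yamamoto's pole-order argument to the Dirac generating functions of Corollary~\ref{cor:generating_functions_lens_spaces}. Dimension and volume are Dirac-spectral invariants, so $\Le$ and $\Le'$ share $m$ and $q$. Using Theorems~\ref{thm:lensspace-or-isometry} and \ref{thm:lensspace-or-spin-isometry} I would replace $\Le$ and $\Le'$ by $(+1)$-spin-isometric representatives satisfying $p_1=s_1=1$. The homogeneity of $\Le$ then reads $p_j\equiv\pm 1\pmod q$ for every $j$, so $\prod_j(\xi_q^{kp_j}-z)(\xi_q^{-kp_j}-z)=[(\xi_q^k-z)(\xi_q^{-k}-z)]^m$, and the theorem reduces to showing that $\varepsilon$-isospectrality forces $s_i\equiv\pm 1\pmod q$ for every $i$.

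Under this normalisation the $k$-th summand of $F_\pm^\Le(z)$ has denominator $[(\xi_q^k-z)(\xi_q^{-k}-z)]^m$, so the summands with $k\equiv\pm a\pmod q$ each contribute a pole of order $m$ at $z=\xi_q^a$, for $1\le a\le q-1$ with $\xi_q^a\neq\pm 1$. For $\Le'$ the $k$-th summand contributes a pole at $z=\xi_q^a$ of order $\#\{i:ks_i\equiv\pm a\pmod q\}$; since $s_1=1$, taking $k\equiv\pm a\pmod q$ gives $\#\{i:s_i\equiv\pm 1\pmod q\}\le m$, with equality if and only if $\Le'$ is homogeneous. Once I verify that the leading Laurent coefficient of $F_\pm^\Le$ at $z=\xi_q^a$ is nonzero for at least one such $a$, matching pole orders through the relation $F_\pm^\Le=F_\pm^{\Le'}$ (respectively $F_\pm^\Le=F_\mp^{\Le'}$ in the $-1$-isospectral case) forces $\Le'$ to be homogeneous, and the last theorem of Section~\ref{sec:setup} gives $\Le'\cong\Le$.

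The main technical hurdle is the non-vanishing of this leading coefficient: evaluating the cyclotomic numerator $\sum_{\epsilon_1\cdots\epsilon_m=\pm(-1)^{m+1}}\xi_{2q}^{(q+1)k\sum_j\epsilon_j}$ at $k\equiv\pm a\pmod q$ and checking that the contributions from $k\equiv a$ and $k\equiv -a$ do not cancel in the coefficient of $(z-\xi_q^a)^{-m}$, which should succeed whenever $\xi_q^a\neq\pm 1$. For the sharpened three-dimensional statement, once $\Le'$ has been identified as $\Le(q;\pm 1)$ (possibly with spin-structure parameter $h\in\{0,1\}$ if $q$ is even), a direct inspection of \eqref{eqn:ls-q_odd-mero-cont_0}--\eqref{eqn:ls-q_even-mero-cont_1} with $m=2$ shows that the four candidates are pairwise distinguished by the pair $(F_+,F_-)$: the sign $\varepsilon$ is detected because $\Le(q;1)$ is not $-1$-isometric to itself for $q>2$ by Corollary~\ref{cor:3dim-or-isometry} and hence has non-symmetric spectrum, while for $q$ even the parameter $h$ alters the prefactor $(-1)^{k(h+h_q^p)}$ of each summand and thus the residues at $z=\xi_q^a$. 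Combining these observations with Corollary~\ref{cor:3dim-or-spin-isometry} yields the required $\varepsilon$-spin-isometry preserving the spin structure.
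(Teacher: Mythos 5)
Your overall strategy is the same as the paper's: compare the orders of the poles of the generating functions at the $q$-th roots of unity, use homogeneity of $\Le$ to produce poles of order $m$ at the $\xi_q^a\neq\pm1$, and observe (correctly) that a non-homogeneous $\Le'$ normalised to $s_1=1$ cannot have a denominator vanishing to order $m$ at any such point. However, the step you flag as "the main technical hurdle" --- non-vanishing of the leading Laurent coefficient of $F_\pm^\Le$ at some $\xi_q^a\neq\pm1$ --- is in fact the only substantive verification in this part of the argument, and you leave it at "should succeed". Without it the first claim is not proved: a priori the numerators of the $k\equiv a$ and $k\equiv -a$ summands could vanish at $\xi_q^a$ or cancel each other. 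The paper closes this by a parity argument: for $m$ even the numerators $A-Bz$ have \emph{real} coefficients (the index set $\{\epsilon:\prod\epsilon_j=\delta\}$ is stable under $\epsilon\mapsto-\epsilon$), so they cannot vanish at a non-real $\xi_q^a$ unless $A=B=0$, which is excluded; for $m$ odd the coefficients of the $k$ and $q-k$ numerators are complex conjugates, so at least one summand retains a pole of full order $m$. You should supply an argument of this kind.

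For the $m=2$ refinement there is a genuine logical error: you infer that $\Le(q;1)$ "has non-symmetric spectrum" from the fact that it is not $-1$-isometric to itself (Corollary~\ref{cor:3dim-or-isometry}). That implication is exactly the converse direction the theorem is establishing --- non-isometry never implies non-isospectrality for free --- so it cannot be invoked here. The spectral asymmetry must be exhibited directly, which is what the paper does: for $q$ odd, $F_+^{(\Le,\tau)}(0)=2\neq 0=F_-^{(\Le,\tau)}(0)$; for $q$ even, $F_\pm^{(\Le,\tau_0)}(0)=(2,0)$ while $F_\pm^{(\Le,\tau_1)}(0)=(0,0)$, and the spin structures $\tau_0,\tau_1$ and the two orientations of $(\Le,\tau_1)$ are then separated by $\lim_{z\to-1}(1+z)^3F_\pm^{(\Le,\tau_1)}(z)=\pm\tfrac{2}{q}$. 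Your remark that "the parameter $h$ alters the prefactor $(-1)^{k(h+h_q^p)}$ and thus the residues" likewise needs to be replaced by an explicit evaluation showing the resulting pairs $(F_+,F_-)$ actually differ.
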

\begin{proof}
	Let $F_\pm$ and $F'_\pm$ be the generating functions of $\Le$ and $\Le'$, respectively. Formulas~\eqref{eqn:ls-q_odd-mero-cont_0}-\eqref{eqn:ls-q_even-mero-cont_1} show that the poles of $F_\pm$ are precisely the $q$-th roots of unity and that these are at most of order $2m$. In fact, the term for $k=0$ generates a pole of order $2m-1$ at $z=1$ and in case $q$ is even, the term for $k=q/2$ generates a pole of order $2m-1$ at $z=-1$. By the homogeneity assumption, the denominator of every term for $k\not\in \{ 1,q/2\}$ has a zero of order $m$. In case $m$ is even, the numerators of these terms have real coefficients, hence, no zeros cancel the ones from the denominator. Let $m$ be odd and fix $1<k_0<q$. Since the coefficients of the numerator of the term for $k=k_0$ are complex conjugate to the coefficients of the numerator of the term for $k=q-k_0$, at least one of these terms has a pole of order $m$ at $z=\xi_q^k$. If $\Le$ and $\Le'$ are isospectral, then, by Proposition~\ref{prop:sphericalspaceforms-isospectrality-cond}, $F'_\pm$ must have a pole of order $m$ at every $q$-th root of unity that is not $1$ or $-1$, hence $s_i\equiv\pm s_j\pmod q$ for all $1\leq i<j\leq m$.
	
	Now let $m=2$ and $q>2$. If $q$ is odd, the multiplicities of the first positive and respectively negative eigenvalue of the Dirac operator on $\Le=\Le(q;1)$ are $F_+^{(L,\tau)}(0)=2$ and $F_-^{(L,\tau)}(0)=0$. If $q$ is even, then $F_+^{(L,\tau_0)}(0)=2$ and $F_-^{(L,\tau_0)}(0)=0$, whereas $F_+^{(L,\tau_1)}(0)=0=F_-^{(L,\tau_1)}(0)$. To distinguish the positive from the negative spectrum when $L$ is endowed with the spin structure $\tau_1$, we note that $\lim_{z\to-1}(1+z)^3F_+^{(L,\tau_1)}(z)=\frac 2q=-\lim_{z\to-1}(1+z)^3F_-^{(L,\tau_1)}(z)$.
\end{proof}

\begin{remark}\label{rmk:rp3-spectral-symmetry}
	Let $\Le=\Le(2;1)=\mathbb{RP}^3$, then $F_\pm^{(\Le,\tau_0)}=\frac{1}{(1-z)^3}\pm\frac{1}{(1+z)^3}=F_\mp^{(\Le,\tau_1)}$. This symmetry is generated by the isometry of $\mathbb{RP}^3$ corresponding to the choices $\ell=1,\,\varepsilon_1=-\varepsilon_2=1$ and $\sigma=(1 2)$ (see Theorem~\ref{thm:lensspace-or-spin-isometry}). Thus, by Theorem~\ref{thm:homo-lensspace-rigidity}, the spin manifold $\mathbb{RP}^3$ is the only three dimensional homogeneous lens space for which the spectrum is invariant under a simultaneous change of orientation and spin structure.
\end{remark}

\section{Three dimensional Lens Spaces}\label{sec:three-dim}

In this section we consider lens spaces of dimension three. We denote by $q$ a positive integer, by $p$ and $s$ integers that are coprime to $q$ and $p,s\not\equiv\pm 1\pmod q$. Furthermore, let $p^*,s^*\in\Z$ be any integers such that $p\cdot p^*\equiv s\cdot s^*\equiv 1\pmod q$.

We note that in the three dimensional case, the generating functions of $\Le(q;p)$ simplify to
$$
F_\pm(z)=\frac{2}{q}\sum_{k=0}^{q-1}\frac{\cos\frac{k(p\mp1)(q+1)}{q}\pi-z\cdot \cos\frac{k(p\pm1)(q+1)}{q}\pi}{\left(\xi_q^k-z\right)\left(\xi_q^{-k}-z\right)\left(\xi_q^{kp}-z\right)\left(\xi_q^{-kp}-z\right)}
$$
for odd $q$,  whereas the generating functions of $\Le(q;p;h)$ for even $q$ simplify to
$$
F_\pm(z)=\frac{2}{q}\sum_{k=0}^{q-1}(-1)^{k(h+h_q^p)}\frac{\cos\frac{k(p\mp1)}{q}\pi-z\cdot \cos\frac{k(p\pm1)}{q}\pi}{\left(\xi_q^k-z\right)\left(\xi_q^{-k}-z\right)\left(\xi_q^{kp}-z\right)\left(\xi_q^{-kp}-z\right)}\,.
$$

We do not give proofs for Lemma~\ref{lem:lem1}, Corollary~\ref{cor:cor0}, Lemma~\ref{lem:lem2}, and Corollary~\ref{cor:cor1} as the statements are the same as those of the corresponding lemmata and corollaries in \cite{0415.58018} and the proofs go through with at most minor modifications.

	\begin{lemma}\label{lem:lem1}
		\begin{align*}
			(q, p+1) = (q, p^\ast+1) & \\
			(q, p-1) = (q, p^\ast-1) &
		\end{align*}
	\end{lemma}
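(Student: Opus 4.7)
The plan is to exploit the fact that $p$ is a unit modulo $q$ to transport the quantities $p^{\ast}\pm 1$ into $p\pm 1$ by multiplying by $p$, after which the gcd with $q$ is unchanged.

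Concretely, from $p p^{\ast} \equiv 1 \pmod{q}$ one computes
\begin{equation*}
p(p^{\ast}+1) \;=\; pp^{\ast} + p \;\equiv\; 1 + p \;=\; p+1 \pmod{q},
\end{equation*}
and similarly
\begin{equation*}
p(p^{\ast}-1) \;=\; pp^{\ast} - p \;\equiv\; 1 - p \;=\; -(p-1) \pmod{q}.
\end{equation*}
Since $(p,q)=1$, multiplication by $p$ is a bijection on $\Z/q\Z$ that preserves the gcd with $q$: for any integer $a$, $\gcd(pa,q) = \gcd(a,q)$. Combined with the trivial identity $\gcd(-a,q)=\gcd(a,q)$, the two congruences above immediately yield $(q, p^{\ast}+1) = (q, p+1)$ and $(q, p^{\ast}-1) = (q, p-1)$.

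There is no real obstacle here; the only care needed is to justify the gcd-invariance under multiplication by a unit, which follows from Bezout or, equivalently, from the fact that the ideal $(a,q)\subset\Z$ coincides with $(pa,q)$ whenever $(p,q)=1$. The statement is symmetric in $p$ and $p^{\ast}$ (both being inverses of each other modulo $q$), which is a useful sanity check.
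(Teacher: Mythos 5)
Your proof is correct. The paper omits the proof of this lemma (deferring to the analogous statement in Ikeda--Yamamoto), and the argument you give --- multiplying $p^\ast\pm1$ by the unit $p$ modulo $q$ and using that multiplication by a unit and negation both preserve $\gcd(\,\cdot\,,q)$ --- is exactly the standard one.
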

	
	\begin{corollary}\label{cor:cor0}
		Let $k$ be an integer such that
		\begin{align*}
			k(p\pm 1) \not\equiv 0 \pmod q \,.
		\end{align*}
		Then
		\begin{align*}
			k(p^\ast\pm 1) \not\equiv 0 \pmod q \,.
		\end{align*}
	\end{corollary}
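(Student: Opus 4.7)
The plan is to deduce Corollary~\ref{cor:cor0} as an immediate consequence of Lemma~\ref{lem:lem1} via an elementary divisibility argument. The key observation is the standard fact that for a positive integer $q$ and any integer $a$ one has
\[
q \mid ka \;\;\Longleftrightarrow\;\; \frac{q}{(q,a)} \,\Bigm|\, k\,.
\]
Applied with $a = p \pm 1$, the congruence $k(p \pm 1) \equiv 0 \pmod q$ becomes $\tfrac{q}{(q, p \pm 1)} \mid k$, and applied with $a = p^{\ast} \pm 1$, the condition $k(p^{\ast} \pm 1) \equiv 0 \pmod q$ becomes $\tfrac{q}{(q, p^{\ast} \pm 1)} \mid k$.

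Next, I would invoke Lemma~\ref{lem:lem1} to identify $(q, p+1) = (q, p^{\ast}+1)$ and $(q, p-1) = (q, p^{\ast}-1)$, so that the two divisors $\tfrac{q}{(q, p \pm 1)}$ and $\tfrac{q}{(q, p^{\ast} \pm 1)}$ coincide. Consequently, the congruences $k(p \pm 1) \equiv 0 \pmod q$ and $k(p^{\ast} \pm 1) \equiv 0 \pmod q$ are equivalent, and taking the contrapositive is exactly the statement of the corollary (treating the $+$ and $-$ cases separately).

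No real obstacle arises: the corollary is essentially a reformulation of Lemma~\ref{lem:lem1} at the level of divisibility of multiples of $p \pm 1$. This is presumably why the authors, following Ikeda--Yamamoto \cite{0415.58018}, omit the proof.
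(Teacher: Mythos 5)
Your proof is correct, and since the paper deliberately omits proofs for Lemma~\ref{lem:lem1} and Corollary~\ref{cor:cor0} (deferring to Ikeda--Yamamoto), there is no in-text argument to compare against, but yours is exactly the route the label ``Corollary'' suggests: combine Lemma~\ref{lem:lem1} with the standard divisibility criterion $q\mid ka \Leftrightarrow \tfrac{q}{(q,a)}\mid k$, and handle the $+$ and $-$ signs componentwise. One small remark: there is an even shorter route that bypasses Lemma~\ref{lem:lem1} entirely. Since $(p,q)=1$, multiplication by $p$ is a bijection modulo $q$, hence $k(p^\ast\pm1)\equiv 0\pmod q$ iff $p\,k(p^\ast\pm1)=k(1\pm p)\equiv 0\pmod q$ iff $k(p\pm1)\equiv 0\pmod q$; this yields the corollary directly, and indeed yields Lemma~\ref{lem:lem1} as well via $(q,p^\ast\pm1)=(q,p(p^\ast\pm1))=(q,p\pm1)$. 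The two approaches have the same mathematical content, and yours respects the logical ordering (Lemma then Corollary) that the paper lays out.
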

	
	\begin{lemma}\label{lem:lem2}
		If $\Le(q;p)$ and $\Le(q;s)$ are isospectral, then
		\begin{align*}
			(q,p-1) = (q, s-1) & \\
			(q,p+1) = (q, s+1) &
		\end{align*}
		or
		\begin{align*}
			(q,p-1) = (q, s+1) & \\
			(q,p+1) = (q, s-1) & \,.
		\end{align*}
		In particular, $ ((q,p-1),(q,p+1))=
		\begin{cases}
		1 & \text{if } q \text{ is odd} \\
		2 & \text{if } q \text{ is even} \,.
		\end{cases} $
	\end{lemma}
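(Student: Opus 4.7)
My plan is to follow the pole-analysis strategy of Ikeda and Yamamoto~\cite{0415.58018}, adapted to the Dirac generating functions $F_\pm$ displayed at the start of this section. Set $d_1(p):=(q,p-1)$, $d_2(p):=(q,p+1)$ and $q_i(p):=q/d_i(p)$, and similarly for $s$. The goal is to show that $\{d_1(p),d_2(p)\}=\{d_1(s),d_2(s)\}$ as unordered pairs, which is precisely the dichotomy stated in the lemma.

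First I would determine the set $M(p)\subseteq\{1,\ldots,q-1\}\setminus\{q/2\}$ of indices $\ell$ for which $F_\pm^{\Le(q;p)}$ has a pole of order at least $2$ at $z=\xi_q^\ell$. The only summands that contribute a pole at $\xi_q^\ell$ are those with $k\in\{\ell,-\ell,\ell p^*,-\ell p^*\}\pmod q$. In the $k=\ell$ term, the factor $\xi_q^{kp}-z$ coincides with $\xi_q^k-z$ precisely when $\ell(p-1)\equiv 0\pmod q$, i.e.\ when $q_1(p)\mid\ell$, and with $\xi_q^{-k}-z$ precisely when $q_2(p)\mid\ell$. By Lemma~\ref{lem:lem1}, $q_i(p)=q_i(p^*)$, so as soon as $q_1(p)\mid\ell$ or $q_2(p)\mid\ell$ the four candidate indices collapse to $\{\ell,-\ell\}$, each corresponding summand having a double pole at $\xi_q^\ell$. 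A short computation shows the coefficient of $(z-\xi_q^\ell)^{-2}$ in the sum is a nonzero multiple of the numerator of the $k=\ell$ term evaluated at $z=\xi_q^\ell$, which using $\ell(p\mp1)\equiv 0\pmod q$ equals $\pm 1-\xi_q^\ell\cdot c$ for some $c\in\R$, and is nonzero because $\xi_q^\ell\notin\R$ for $\ell\ne 0,q/2$. Conversely, if neither divisibility holds the four relevant $k$-values are pairwise distinct modulo $q$ and each contributing summand only has a simple pole at $\xi_q^\ell$, so the total order is at most $1$. Hence $M(p)=\{\ell\in\{1,\ldots,q-1\}\setminus\{q/2\}:q_1(p)\mid\ell\text{ or }q_2(p)\mid\ell\}$.

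Next I would invoke Proposition~\ref{prop:sphericalspaceforms-isospectrality-cond}: isospectrality implies $\{F_+^{\Le(q;p)},F_-^{\Le(q;p)}\}=\{F_+^{\Le(q;s)},F_-^{\Le(q;s)}\}$ as unordered pairs, and since $F_+$ and $F_-$ share denominators the characterization above yields $M(p)=M(s)$. Letting $H_i(p)\subset\Z/q$ denote the unique cyclic subgroup of order $d_i(p)$, we have $M(p)=(H_1(p)\cup H_2(p))\setminus\{0,q/2\}$, while $H_1(p)\cap H_2(p)\subseteq\{\ell:q\mid 2\ell\}$ equals $\{0\}$ for $q$ odd and $\{0,q/2\}$ for $q$ even. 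Because two cyclic subgroups of $\Z/q$ of the same order coincide, the union determines the unordered pair $\{H_1(p),H_2(p)\}$, and hence $\{d_1(p),d_2(p)\}$; equating with $\{d_1(s),d_2(s)\}$ yields the two cases of the lemma. For the remaining claim, let $d:=(d_1(p),d_2(p))$: then $d\mid\gcd(p-1,p+1)$, which divides $2$, and $d\mid q$. For $q$ odd, $d$ is odd and $d\mid 2$ forces $d=1$; for $q$ even, $(q,p)=1$ makes $p$ odd, so $2\mid d_i(p)$ for $i=1,2$ and hence $d=2$.

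The main obstacle is the non-cancellation assertion in Step~1: one must rule out that the numerator at $z=\xi_q^\ell$ accidentally kills the predicted double pole. This is where the non-reality of $\xi_q^\ell$ for $\ell\ne 0,q/2$ is essential; in the even-$q$ case one also verifies that the extra sign $(-1)^{k(h+h_q^p)}$ is invariant under $k\mapsto-k$ modulo $2$, so the $k=\ell$ and $k=-\ell$ contributions add constructively. A secondary subtlety is the recovery of the pair $\{H_1(p),H_2(p)\}$ from their union, which rests on the almost-disjointness $H_1(p)\cap H_2(p)\subseteq\{0,q/2\}$ established above.
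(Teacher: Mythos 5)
Your proposal is correct and takes essentially the same route as the paper, which omits the proof of this lemma and defers to the pole-order analysis of Ikeda and Yamamoto in \cite{0415.58018}: one reads off the unordered pair $\{(q,p-1),(q,p+1)\}$ from the set of $q$-th roots of unity $\xi_q^\ell$, $\ell\not\equiv 0,q/2$, at which the generating functions have a pole of order two. Your verification that the numerators of the Dirac generating functions do not cancel the predicted double poles (and that the sign $(-1)^{k(h+h_q^p)}$ is even in $k\mapsto -k$ for even $q$) is precisely the ``minor modification'' of the Laplace argument that the paper alludes to.
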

	
	\begin{corollary}\label{cor:cor1}
		If $\Le(q;p)$ and $\Le(q;s)$ are isospectral and $k$ is an integer satisfying
		$$
		k(p\pm 1)\not\equiv 0 \pmod q \,,
		$$
		then
		$$
		k(s\pm 1)\not\equiv 0 \pmod q \,.
		$$
	\end{corollary}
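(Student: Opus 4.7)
The plan is very short: the statement is essentially a formal consequence of Lemma~\ref{lem:lem2}, once one rewrites the nonvanishing condition modulo $q$ in terms of $\gcd$'s.

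First I would record the elementary equivalence
\begin{equation*}
k(p\pm 1)\equiv 0 \pmod q \iff \frac{q}{(q,p\pm 1)}\,\Big|\, k.
\end{equation*}
Thus the hypothesis $k(p\pm 1)\not\equiv 0 \pmod q$ is equivalent to the assertion that neither of the two integers $\frac{q}{(q,p-1)}$ and $\frac{q}{(q,p+1)}$ divides $k$, and similarly for $s$. Hence the conclusion depends only on the unordered pair of divisors $\bigl\{\tfrac{q}{(q,p-1)},\tfrac{q}{(q,p+1)}\bigr\}$, or equivalently on the unordered pair $\bigl\{(q,p-1),(q,p+1)\bigr\}$.

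Next I would invoke Lemma~\ref{lem:lem2}: since $\Le(q;p)$ and $\Le(q;s)$ are assumed isospectral, one of the two cases there gives
\begin{equation*}
\bigl\{(q,p-1),(q,p+1)\bigr\} = \bigl\{(q,s-1),(q,s+1)\bigr\}
\end{equation*}
as unordered pairs. Combined with the equivalence of the preceding paragraph, this immediately yields that $k(p\pm 1)\not\equiv 0 \pmod q$ forces $k(s\pm 1)\not\equiv 0 \pmod q$, which is the desired conclusion.

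There is essentially no obstacle here: the work has already been absorbed into Lemma~\ref{lem:lem2}, and the corollary is a purely number-theoretic repackaging of that lemma in a form convenient for the later analysis of the equations~\eqref{eqn:intro_3}. The only minor point worth noting in the writeup is the symmetric role of the two signs $\pm$, which is precisely why the conclusion is insensitive to which of the two cases of Lemma~\ref{lem:lem2} actually holds.
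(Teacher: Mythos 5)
Your argument is correct: the equivalence $k(p\pm1)\equiv 0\pmod q \iff \frac{q}{(q,p\pm1)}\mid k$ reduces everything to the unordered pair $\{(q,p-1),(q,p+1)\}$, which Lemma~\ref{lem:lem2} shows is an isospectral invariant. The paper omits the proof entirely (deferring to Ikeda--Yamamoto), and your deduction is precisely the intended one.
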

	
	\begin{proposition}
		Let $k\in\Z$ such that $k(p\pm 1)\not\equiv 0\pmod q$. If $q$ is odd, the residues of the generating functions $F_\pm$ of the lens space $\Le(q;p)$ at $z=\xi_q^k$ are
		\begin{gather}\label{eq:residuumoddq}
			-\frac{2i}{q}\frac{\xi_q^k}{\left(1-\xi_q^{2k}\right)^{2}}\cdot\\
			\Bigg(\left(\cos\frac{k(p\mp1)(q+1)}{q}\pi-\xi_q^k \cos\frac{k(p\pm1)(q+1)}{q}\pi\right)\left(\cot\frac{k(p-1)}{q}\pi-\cot\frac{k(p+1)}{q}\pi\right)+\notag\\ \left(\cos\frac{k(p^{*}\mp1)(q+1)}{q}\pi-\xi_q^k \cos\frac{k(p^{*}\pm1)(q+1)}{q}\pi\right)
			\left(\cot\frac{k(p^{*}-1)}{q}\pi-\cot\frac{k(p^{*}+1)}{q}\pi\right)\Bigg)\notag\,.
		\end{gather}
		If $q$ is even, the residues of the generating functions $F_\pm$ of the lens space $\Le(q;p;h)$ at $z=\xi_q^k$ are
		\begin{gather}
			 -\frac{2i}{q}\frac{\xi_q^k}{\left(1-\xi_q^{2k}\right)^{2}}\cdot\label{eq:residuumevenq}\\
			 \Bigg((-1)^{k(h+h_q^p)}\left(\cos\frac{k(p\mp1)}{q}\pi-\xi_q^k \cos\frac{k(p\pm1)}{q}\pi\right)\left(\cot\frac{k(p-1)}{q}\pi-\cot\frac{k(p+1)}{q}\pi\right)+\notag \\
			 (-1)^{k(p^*(h+h_q^p)+u(p,p^*))}\left(\cos\frac{k(p^{*}\mp1)}{q}\pi-\xi_q^k \cos\frac{k(p^{*}\pm1)}{q}\pi\right)\left(\cot\frac{k(p^{*}-1)}{q}\pi-\cot\frac{k(p^{*}+1)}{q}\pi\right)\Bigg)\,,\notag
		\end{gather}
		where $u(p,p^*)$ is defined by $p\cdot p^*=u(p,p^*)q+1$.
	\end{proposition}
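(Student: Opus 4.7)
My plan is to compute the residue at $z = \xi_q^k$ by decomposing the generating function into its simple-pole contributions. First I will identify which summands of $F_\pm$ contribute: the denominator $(\xi_q^l - z)(\xi_q^{-l} - z)(\xi_q^{lp} - z)(\xi_q^{-lp} - z)$ vanishes at $z = \xi_q^k$ precisely when $l$ lies in one of the residue classes $\pm k$ or $\pm k p^* \pmod q$. The hypothesis $k(p\pm 1)\not\equiv 0\pmod q$ together with Corollary~\ref{cor:cor0} guarantees these four classes are pairwise distinct and that in each corresponding summand only one denominator factor vanishes, so each yields a simple pole.

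Next I will exploit the symmetry $l \mapsto q-l$, which permutes the four denominator factors and thus leaves each summand's denominator invariant. The numerator at $z = \xi_q^k$ is also invariant: for odd $q$ because $(p\mp 1)(q+1)$ is even, and for even $q$ because $p$ is odd (so $p\mp 1$ is even) and $(-1)^{(q-l)(h + h_q^p)} = (-1)^{l(h + h_q^p)}$. Thus the contributions from $l = k$ and $l = q-k$ coincide, as do those from $l = kp^*$ and $l = -kp^*$, so I only have to compute two residues and double.

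For the $l = k$ residue I factor the denominator as $(\xi_q^k - z)\,g(z)$ and evaluate $g(\xi_q^k)$ using the identity $\xi_q^a - \xi_q^b = 2i\,\xi_{2q}^{a+b}\sin\frac{(a-b)\pi}{q}$, obtaining $g(\xi_q^k) = -8i\xi_q^k\sin\frac{2k\pi}{q}\sin\frac{k(p-1)\pi}{q}\sin\frac{k(p+1)\pi}{q}$. The identities $(1 - \xi_q^{2k})^2 = -4\xi_q^{2k}\sin^2(2k\pi/q)$ and $\cot A - \cot B = \sin(B-A)/(\sin A\sin B)$ then let me recast the resulting expression in the form $-(2i/q)\,\xi_q^k(1-\xi_q^{2k})^{-2}$ times the cosine--cotangent product displayed in the proposition.

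For the $l = kp^*$ residue I perform the parallel computation with $p$ replaced by $p^*$, but I must re-express the numerator. Writing $pp^* = 1 + u(p,p^*)q$ gives $kp^*(p\mp 1) = \mp k(p^* \mp 1) + k u(p,p^*)q$. In the cosine argument multiplied by $(q+1)\pi/q$ for odd $q$, the shift $k u(p,p^*)(q+1)\pi$ is an even multiple of $\pi$ and so disappears; in the even-$q$ case the analogous shift $k u(p,p^*)\pi$ leaves a sign $(-1)^{ku(p,p^*)}$, which combines with the summand sign $(-1)^{kp^*(h+h_q^p)}$ to yield exactly $(-1)^{k(p^*(h+h_q^p) + u(p,p^*))}$. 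I expect this sign bookkeeping in the even-$q$ case to be the main obstacle, since the relevant parity really lives modulo $2q$ rather than modulo $q$, and every shift coming from $u(p,p^*)$ has to be traced consistently through both the cosine arguments and the overall sign prefactor. Summing the doubled contributions then gives formulas \eqref{eq:residuumoddq} and \eqref{eq:residuumevenq}.
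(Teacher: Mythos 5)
Your proposal is correct and follows essentially the same route as the paper: identify the four summands with $l\equiv\pm k,\pm kp^*\pmod q$, pair them by the $l\mapsto q-l$ symmetry to get a factor of $2$, evaluate the two simple-pole limits, and rewrite the result in cotangent form, with the $u(p,p^*)$ bookkeeping handling the $l=kp^*$ numerator. The only (cosmetic) difference is that you convert the denominators via sine-product identities and $\cot A-\cot B=\sin(B-A)/(\sin A\sin B)$, whereas the paper uses $\cot\theta=\tfrac{2i}{e^{2i\theta}-1}+i$ together with a partial-fraction decomposition.
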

	\begin{proof}
		The condition on $k$ ensures that the $F_\pm$ have a pole of order one at $z=\xi_q^k$. There are precisely four terms that contribute to the residue of $F_\pm$ at $z=\xi_q^k$. Let $q$ be odd. Then we calculate straightforwardly:
		\begin{equation*}
			\begin{split}
				\lim\limits_{z\to\xi_q^k}(\xi_q^k-z)F_\pm & = 
				\frac 2q \lim\limits_{z\to\xi_q^k}(\xi_q^k-z)\sum_{l=0}^{q-1}\frac{\cos\frac{l(p\mp1)(q+1)}{q}\pi-z \cos\frac{l(p\pm1)(q+1)}{q}\pi}{\left(\xi_q^l-z\right)\left(\xi_q^{-l}-z\right)\left(\xi_q^{lp}-z\right)\left(\xi_q^{-lp}-z\right)}\\
				& = \frac 4q\left(\frac{\cos\frac{k(p\mp1)(q+1)}{q}\pi-\xi_q^k \cos\frac{k(p\pm1)(q+1)}{q}\pi}{\left(\xi_q^{-k}-\xi_q^k\right)\left(\xi_q^{kp}-\xi_q^k\right)\left(\xi_q^{-kp}-\xi_q^k\right)} + 
				\frac{\cos\frac{kp^*(p\mp1)(q+1)}{q}\pi-\xi_q^k \cos\frac{kp^*(p\pm1)(q+1)}{q}\pi}{\left(\xi_q^{kp^*}-\xi_q^k\right)\left(\xi_q^{-kp^*}-\xi_q^k\right)\left(\xi_q^{-k}-\xi_q^k\right)}\right)\\
				& = \frac 4q \frac{\xi_q^k}{1-\xi_q^{2k}}\left(\frac{\cos\frac{k(p\mp1)(q+1)}{q}\pi-\xi_q^k \cos\frac{k(p\pm1)(q+1)}{q}\pi}{\left(1-\xi_q^{k(1-p)}\right)\left(1-\xi_q^{k(1+p)}\right)}+\frac{\cos\frac{kp^*(p\mp1)(q+1)}{q}\pi-\xi_q^k \cos\frac{kp^*(p\pm1)(q+1)}{q}\pi}{\left(1-\xi_q^{k(1-p^*)}\right)\left(1-\xi_q^{k(1+p^*)}\right)}\right)\,.\\				
			\end{split}
		\end{equation*}
		Using the formula $\cot \theta = \tfrac {2i}{e^{2i\theta}-1}+i$, we transform the denominators to
		\begin{equation*}
			\begin{split}
				\frac{1}{\left(1-\xi_q^{k(1-p)}\right)\left(1-\xi_q^{k(1+p)}\right)} & = -\frac 1 {\xi_q^{k(p+1)}\left(1-\xi_q^{-k(p-1)}\right)\left(1-\xi_q^{-k(p+1)}\right)}\\
			& =  \frac 1 {\xi_q^{k(p+1)}\left(\xi_q^{-k(p-1)}-\xi_q^{-k(p+1)}\right)}\left(\frac{1}{1-\xi_q^{-k(p+1)}}-\frac{1}{1-\xi_q^{-k(p-1)}} \right) \\
			& = -\frac{1}{1-\xi_q^{2k}}\frac{i}{2}\left(\cot\frac{k(p-1)}{q}\pi -\cot\frac{k(p+1)}{q}\pi\right)\,.
			\end{split}
		\end{equation*}
		The case when $q$ is even is very similar.		
	\end{proof}
	
	\begin{definition}
		Let $k$ be an integer such that $k(p\pm 1)\not\equiv 0\pmod q$. If $q$ is odd, let
		\begin{equation*}
			I_q^{p;k} := \xi_q^{\frac{q+1}{2}k p}\left(\cot\frac{k(p-1)}{q}\pi - \cot\frac{k(p+1)}{q}\pi\right)\,.
		\end{equation*}
		If $q$ is even, define
		\begin{equation*}
			I_q^{p;k;h} := (-1)^{k(h+h_q^p)}\, \xi_{2q}^{k p}\left(\cot\frac{k(p-1)}{q}\pi - \cot\frac{k(p+1)}{q}\pi\right)
		\end{equation*}
		for $h\in\Z$.
	\end{definition}
	
	For the the next corollary, let $\sigma:\C\to\C$ denote complex conjugation. 
	\begin{corollary}\label{cor:isospec-lensspaces-eqs}
		If $q$ is odd and the lens spaces $\Le(q;p)$ and $\Le(q;s)$ are $\varepsilon$-isospectral, then
		\begin{equation}\label{eq:transcendental-isospec-q_odd}
					I_q^{p;k} + I_q^{p^*;k} = \sigma^{\tfrac 12(1-\varepsilon)}\left(I_q^{s;k}+I_q^{s^*;k}\right)
		\end{equation}
		for every $k\in\Z$ s.t. $k(p\pm 1)\not\equiv 0 \pmod q$. Now assume $q$ is even. If the lens spaces $\Le(q;p;h)$ and $\Le(q;s;h')$ are $\varepsilon$-isospectral, then
		\begin{equation}\label{eq:transcendental-isospec-q_even}
				I_q^{p;k;h}+(-1)^{k\left(u(p,p^*)+h_q^p+h_q^{p^*}\right)} I_q^{p^*;k;h} = \sigma^{\tfrac 12(1-\varepsilon)}\left(I_q^{s;k;h'}+(-1)^{k\left(u(s,s^*)+h_q^s+h_q^{s^*}\right)} I_q^{s^*;k;h'}\right)
		\end{equation}
		for every $k\in\Z$ s.t. $k(p\pm 1)\not\equiv 0 \pmod q$.
	\end{corollary}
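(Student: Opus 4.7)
The plan is to read off the constraints that $\varepsilon$-isospectrality imposes by matching residues of $F_\pm$ at each simple pole $\xi_q^k$. By Proposition~\ref{prop:sphericalspaceforms-isospectrality-cond}, the hypothesis yields $F_\pm^{\Le(q;p)} = F_\pm^{\Le(q;s)}$ when $\varepsilon=+1$ and $F_\pm^{\Le(q;p)} = F_\mp^{\Le(q;s)}$ when $\varepsilon=-1$, as meromorphic functions on $\C$; in particular the residues at every $\xi_q^k$ must agree. Corollary~\ref{cor:cor1} guarantees that the assumption $k(p\pm1)\not\equiv 0\pmod q$ transfers to $k(s\pm 1)\not\equiv 0\pmod q$, so the residue formula of the preceding proposition applies on both sides and the poles really are simple.

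The central computation is to simplify the cosine differences in that residue formula. For $q$ odd, expanding $\cos$ in exponentials and using $\xi_{2q}^{q+1}=-\xi_{2q}$, one verifies the identities
\begin{align*}
\cos\tfrac{k(p-1)(q+1)}{q}\pi - \xi_q^k\cos\tfrac{k(p+1)(q+1)}{q}\pi &= -i\,\xi_q^{(q+1)k(p+1)/2}\sin\tfrac{2k\pi}q\,,\\
\cos\tfrac{k(p+1)(q+1)}{q}\pi - \xi_q^k\cos\tfrac{k(p-1)(q+1)}{q}\pi &= -i\,\xi_q^{-(q+1)k(p-1)/2}\sin\tfrac{2k\pi}q\,,
\end{align*}
the key input being $(-1)^{k(p+1)}\xi_{2q}^{k(p+1)}=\xi_q^{(q+1)k(p+1)/2}$, which is exactly the phase appearing in $I_q^{p;k}$. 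Substituting into the residue formula and factoring out the $p$-independent phase $\xi_q^{(q+1)k/2}$, the residue of $F_+^{\Le(q;p)}$ at $\xi_q^k$ equals a $p$- and $s$-independent nonzero constant (depending only on $k$ and $q$) times $I_q^{p;k}+I_q^{p^\ast;k}$, while the residue of $F_-^{\Le(q;p)}$ equals the same constant times $\sigma(I_q^{p;k}+I_q^{p^\ast;k})$, since the cotangent differences are real and $\sigma(\xi_q^{(q+1)kp/2}) = \xi_q^{-(q+1)kp/2}$. Matching residues across the two lens spaces according to $\varepsilon$ then produces \eqref{eq:transcendental-isospec-q_odd}.

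For $q$ even, the same strategy is applied to the residue formula attached to the spin structure $\tau_h$. The extra signs $(-1)^{k(h+h_q^p)}$ and $(-1)^{k(p^\ast(h+h_q^p)+u(p,p^\ast))}$ weight the $p$- and $p^\ast$-summands. Since $p^\ast$ must be odd when $q$ is even, the latter reduces to $(-1)^{k(h+h_q^p+u(p,p^\ast))}$; after rewriting the $p^\ast$-summand in terms of $I_q^{p^\ast;k;h}$, which already carries the factor $(-1)^{k(h+h_q^{p^\ast})}$, the leftover correction is precisely $(-1)^{k(u(p,p^\ast)+h_q^p+h_q^{p^\ast})}$, yielding \eqref{eq:transcendental-isospec-q_even}.

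The only real obstacle is the bookkeeping of roots of unity and signs; the analytic content is simply that $\varepsilon$-isospectrality forces equality of residues. Once the cosine identities above are in hand, everything drops out by inspection, and $\sigma$ enters in the $\varepsilon=-1$ case precisely because swapping $F_+$ with $F_-$ swaps $p+1$ with $p-1$ in the cosine arguments, which complex-conjugates the phase $\xi_q^{(q+1)kp/2}$.
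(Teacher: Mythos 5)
Your proof is correct and follows essentially the same strategy as the paper: both arguments extract equation \eqref{eq:transcendental-isospec-q_odd} (resp.\ \eqref{eq:transcendental-isospec-q_even}) by equating the residues of the generating functions at the simple poles $\xi_q^k$, using Proposition~\ref{prop:sphericalspaceforms-isospectrality-cond} and Corollary~\ref{cor:cor1}. The one genuine difference is in the bookkeeping after that: the paper divides the residues \eqref{eq:residuumoddq} by their common prefactor, passes to imaginary parts, divides by $\sin(2\pi k/q)$, and then adds/subtracts the ``$+$'' and ``$-$'' equations to reassemble the real and imaginary parts of \eqref{eq:transcendental-isospec-q_odd}; you instead observe the closed-form identity
\[
\cos\tfrac{k(p-1)(q+1)}{q}\pi-\xi_q^k\cos\tfrac{k(p+1)(q+1)}{q}\pi=-i\,\xi_{2q}^{(q+1)k(p+1)}\sin\tfrac{2k\pi}{q},
\]
(and its analogue swapping $p\pm1$) and factor out the single $p$-independent phase $\xi_{2q}^{(q+1)k}$ from both summands. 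This is a tidier route to the same equations and avoids the detour through real and imaginary parts; I verified that the identity holds (it reduces to $\alpha=\beta\xi_q^{-k}$ with $\alpha=\xi_{2q}^{(q+1)k(p-1)}$, $\beta=\xi_{2q}^{(q+1)k(p+1)}$), and that the same collapse works without the $(q+1)$ twist for even $q$. Your handling of the even-$q$ sign is also correct: since $p^*$ is odd one has $p^*(h+h_q^p)\equiv h+h_q^p\pmod 2$, and rewriting the $p^*$-term in terms of $I_q^{p^*;k;h}$ leaves exactly the correction $(-1)^{k(u(p,p^*)+h_q^p+h_q^{p^*})}$ claimed in the statement.
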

	\begin{proof}
		Assume $q$ is odd and the lens spaces $\Le=\Le(q;p)$ and $\Le'=\Le(q;s)$ are $+1$-isospectral. By Proposition~\ref{prop:sphericalspaceforms-isospectrality-cond}, the generating functions of these lens spaces satisfy $F_\pm^\Le=F_\pm^{\Le'}$. In particular, the residues of their poles at every $z=\xi_q^k$ with $k$ such that $k(p\pm1)\not\equiv0\pmod q$ coincide. Dividing the residues \eqref{eq:residuumoddq} by their common factor
		$-2i/q\cdot \xi_q^k/(1-\xi_q^{2k})^2$ 
		, considering the imaginary parts of the resulting equations and dividing again by the common factor $\sin\left(2\pi i k/q\right)$, we obtain the set of equations
		\begin{align*}
			\cos\tfrac{k(p\mp 1)(q+1)}{q}\pi \left(\cot\tfrac{k(p-1)}{q}\pi - \cot\tfrac{k(p+1)}{q}\pi\right) + 
			\cos\tfrac{k(p^*\mp 1)(q+1)}{q}\pi \left(\cot\tfrac{k(p^*-1)}{q}\pi - \cot\tfrac{k(p^*+1)}{q}\pi\right) \\
			= \cos\tfrac{k(s\mp 1)(q+1)}{q}\pi \left(\cot\tfrac{k(s-1)}{q}\pi - \cot\tfrac{k(s+1)}{q}\pi\right) + 
			\cos\tfrac{k(s^*\mp 1)(q+1)}{q}\pi \left(\cot\tfrac{k(s^*-1)}{q}\pi - \cot\tfrac{k(s^*+1)}{q}\pi\right)\,.
		\end{align*}
		Addition and subtraction of the equations "+" and "-" yield, up to a common factor, the real and imaginary parts of equation \eqref{eq:transcendental-isospec-q_odd}. The case of $-1$-isospectrality is very similar. The case of even $q$ is similar, too.
	\end{proof}
	\begin{remark}
		One can show by either straightforward calculations or using Corollary~\ref{cor:3dim-or-spin-isometry} that the term $ (-1)^{u(p,p^*)+h_q^p+h_q^{p^*}}$ in the equations~\eqref{eq:transcendental-isospec-q_even} is an invariant of the $+1$-spin-isometry class of $\Le(q;p;h)\cong\Le(q;p^*;h+h_q^p+h_q^{p^*}+u(p,p^*))$.
	\end{remark}

	We denote by $\Q_q$ the $q$-th cyclotomic field $\Q(\xi_q)$. Its ring of integers is $\Z[\xi_q]$. Let $\lambda:=1-\xi_q\in\Q_q$. Then $(\lambda)$ is a prime ideal in $\Z[\xi_q]$ and we have $(q)=(\lambda)^{q-1}$. From now on, we consider on $\Q_q$ the $\lambda$-adic valuation and the corresponding metric. This provides a notion of convergence (see e.g.\ \cite[Chapter 17.1.2]{1082.11065}).
	
	\begin{lemma}\label{lem:maintechnical}
		Let $q\ge5$ be prime. Then for all $ 1\le l,k\le q-1 $, the number
		$$
		\xi_q^l\frac{\lambda}{1-\xi_q^k}\in\Z\left[\xi_q\right]
		$$
		has the convergent power series expansion
		\begin{align*}
			\xi_q^l \frac{\lambda}{1-\xi_q^k} = \frac 1k \Big( 1 & + \frac{-1-2l+k}{2}\lambda \\
											  & + \frac{-1+k^2-6kl+6l^2}{12} \lambda^2 \\					
											  & + \frac{-1+k^2-6kl-2k^2l+6l^2+6kl^2-4l^3}{24}\lambda^3 + \ldots\,,
		\end{align*}
		all of whose coefficients are elements of $\Z_{(q)}$, the localization of $\Z$ by the maximal ideal $(q)$.
	\end{lemma}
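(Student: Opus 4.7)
The plan is to substitute $\xi_q = 1-\lambda$ throughout and expand the result as a power series in $\lambda$. First I would note that $\xi_q^l\cdot\lambda/(1-\xi_q^k)$ is an actual element of $\Z[\xi_q]$: from $1-\xi_q^k = (1-\xi_q)(1+\xi_q+\cdots+\xi_q^{k-1})$ one sees that $\lambda/(1-\xi_q^k) = 1/(1+\xi_q+\cdots+\xi_q^{k-1})$, and the fact that $(k,q)=1$ provides an integer $k^\ast$ with $kk^\ast\equiv 1\pmod q$, giving the explicit inverse via the identity $1-\xi_q = (1-\xi_q^k)(1+\xi_q^k+\cdots+\xi_q^{k(k^\ast-1)})$ in $\Z[\xi_q]$.

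Substituting $\xi_q = 1-\lambda$ yields
$$1 - \xi_q^k \;=\; 1 - (1-\lambda)^k \;=\; k\lambda\cdot v_k(\lambda),\qquad v_k(\lambda) \;:=\; \sum_{j=0}^{k-1}\frac{(-1)^j}{j+1}\binom{k-1}{j}\lambda^j,$$
so that, viewed as rational functions of $\lambda$,
$$\xi_q^l\cdot\frac{\lambda}{1-\xi_q^k} \;=\; \frac{(1-\lambda)^l}{k\cdot v_k(\lambda)}.$$
Since $q$ is prime and $1\le j+1\le k\le q-1$, every denominator $j+1$ is coprime to $q$, so $v_k \in \Z_{(q)}[\lambda]$ with constant term $1$. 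It is therefore a unit in the formal power series ring $\Z_{(q)}[[\lambda]]$, its inverse $v_k^{-1} = \sum_{j\ge 0} b_j\lambda^j$ having coefficients determined inductively by the Cauchy-product recursion $b_0=1$, $b_j = -\sum_{i=1}^{j} a_i b_{j-i}$, where $v_k = \sum_j a_j\lambda^j$. Multiplying by $(1-\lambda)^l \in \Z[\lambda]$ and by $1/k \in \Z_{(q)}$ then produces a power series $\sum_j c_j\lambda^j$ with all $c_j \in \Z_{(q)}$.

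For $\lambda$-adic convergence and for the identification of the sum with $\xi_q^l\lambda/(1-\xi_q^k)$ itself: from $(q)=(\lambda)^{q-1}$ it follows that every element of $\Z_{(q)}\subset\Q_q$ has nonnegative $\lambda$-adic valuation, hence $v_\lambda(c_j\lambda^j)\ge j\to\infty$. Moreover, the $N$-th partial sum, cleared of denominators, satisfies a polynomial identity relating $\xi_q^l\lambda$ and $1-\xi_q^k$ modulo $(\lambda)^{N+1}$, and passing to the limit identifies the convergent series with the original element. To confirm the four explicit coefficients I would compute $a_1=-(k-1)/2$, $a_2=(k-1)(k-2)/6$, $a_3=-(k-1)(k-2)(k-3)/24$, derive $b_1=(k-1)/2$, $b_2=(k^2-1)/12$, $b_3=(k^2-1)/24$ from the recursion, and expand $(1-\lambda)^l\cdot(1+b_1\lambda+b_2\lambda^2+b_3\lambda^3+\cdots)/k$ through order three, matching the stated rational expressions in $k$ and $l$.

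The only serious point is that primality of $q$ enters decisively: the denominators appearing in $v_k$ (and therefore in $v_k^{-1}$) are drawn from $\{2,3,\ldots,k\}\subseteq\{2,\ldots,q-1\}$, and only primality guarantees that all of them are invertible in $\Z_{(q)}$. For composite $q$ the construction breaks down at precisely this step, which is presumably why the same method cannot establish Theorem~\ref{thm:mainthm} in full generality.
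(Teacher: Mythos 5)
Your proposal is correct and follows essentially the same route as the paper's proof: the explicit $\Z[\xi_q]$-membership via $kk^\ast\equiv 1\pmod q$, the factorization $1-\xi_q^k=k\lambda\sum_{j}\binom{k-1}{j}\tfrac{(-\lambda)^j}{j+1}$, inversion of this unit in $\Z_{(q)}[[\lambda]]$ (possible precisely because $j+1\le k\le q-1$ is prime to $q$), and the Cauchy product with $(1-\lambda)^l$. Your added care about $\lambda$-adic convergence and the identification of the limit, as well as the verified coefficients $b_1=(k-1)/2$, $b_2=b_3\cdot 2=(k^2-1)/12$, are consistent with the paper, which leaves these details implicit.
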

	\begin{proof}
	To prove the first claim, choose $1\le k^*\le q-1$ such that $kk^*\equiv 1 \pmod q$. Then
	\begin{align*}
		\frac{\lambda}{1-\xi_q^k}	& = \frac{1-\xi_q}{1-\xi_q^k} = \frac{1-\left(\xi_q^k\right)^{k^*}}{1-\xi_q^k}\\
									&=1+\xi_q^k+\xi_q^{2k}+\ldots +\xi_q^{k(k^*-1)}\,.
	\end{align*}
	To prove the statement about the power series, we expand the individual parts separately. The first part is
	\begin{align*}
		\xi_q^l  & = \left(1-\lambda\right)^l = \sum_{j=0}^{l}\binom lj (-\lambda)^j \\
		& = 1-l\lambda+\frac{l(l-1)}{2}\lambda^2-\frac{l(l-1)(l-2)}{6}\lambda^3+\ldots \,.
	\end{align*}
	Next, we expand $1-\xi_q^k$ into the power series
	\begin{align*}
		1-\xi_q^k & = 1-\left(1-\lambda\right)^k=\lambda k \sum_{j=0}^{k-1}\binom{k-1}j\frac{(-\lambda)^j}{j+1}\\
				  & = \lambda k \Big( 1-\frac{k-1}{2}\lambda + \frac{(k-1)(k-2)}{6}\lambda^2-\frac{(k-1)(k-2)(k-3)}{24}\lambda^3+\ldots \,.
	\end{align*}
	Because of $1\le k\le q-1$, all coefficients of this power series are elements of $\Z_{(q)}$. The same is then true for the series of $\lambda/(1-\xi_q^k)$ and its Cauchy product with the series of $1-\xi_q^l$. This is sufficient for the claimed convergence statement.
	\end{proof}
	
	\begin{theorem}
		\label{thm:mainthm}
		Let $q\in\N$ be prime. If two lens spaces with fundamental groups of order $q$ are $\varepsilon$-isospectral, then they are $\varepsilon$-spin-isometric.

	\end{theorem}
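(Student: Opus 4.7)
By Theorem~\ref{thm:homo-lensspace-rigidity} we may assume that neither $\Le(q;p)$ nor $\Le(q;s)$ is homogeneous, i.e., $p, s \not\equiv \pm 1 \pmod q$, which forces $q \ge 5$. By Corollary~\ref{cor:3dim-or-isometry} and the uniqueness of the spin structure for odd $q$, it suffices to deduce from equation~\eqref{eq:transcendental-isospec-q_odd} that $p \equiv s \pmod q$ or $p s \equiv 1 \pmod q$ in the case $\varepsilon = +1$. To reduce the $\varepsilon = -1$ case to this one, one observes that complex conjugation sends $I_q^{s;k}$ to $I_q^{-s;k}$ (since $\cot$ is odd), so the $\varepsilon = -1$ equation is the $\varepsilon = +1$ equation for the pair $(p, -s)$, and the $-1$-spin-isometry conclusion follows.

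The core idea is to exploit the $\lambda$-adic expansion supplied by Lemma~\ref{lem:maintechnical}. Using the elementary identity $\cot\theta = i - 2i/(1 - e^{2i\theta})$, one rewrites
\[
I_q^{p;k} = 2i\,\xi_q^{\frac{q+1}{2}kp}\left(\frac{1}{1-\xi_q^{k(p+1)}} - \frac{1}{1-\xi_q^{k(p-1)}}\right),
\]
so that multiplying equation~\eqref{eq:transcendental-isospec-q_odd} by $\lambda = 1-\xi_q$ yields an identity in $\Z[\xi_q]$ between sums of terms of the shape $\xi_q^l\,\lambda/(1-\xi_q^{k'})$, to which Lemma~\ref{lem:maintechnical} applies. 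Expanding both sides $\lambda$-adically produces convergent power series whose coefficients lie in $\Z_{(q)}$. Equating the coefficient of $\lambda^n$ modulo $q$ yields, for each $n \ge 0$, a rational function in $k$ and the parameters $p, p^*, s, s^*$ that vanishes on all of $(\Z/q\Z)^\times$; after clearing powers of $k$ from the denominator this becomes a polynomial identity in $k$, and since $q$ is prime the polynomial (of degree less than $q-1$) must be identically zero. Reading off its coefficients in $k$ then produces a genuine system of polynomial identities among $p, p^*, s, s^*$ in $\Z/q\Z$.

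A direct computation shows that the identities extracted from $\lambda^0$ and $\lambda^1$ are automatically satisfied as consequences of the symmetry $1/(p^2-1) + 1/((p^*)^2-1) \equiv -1 \pmod q$, so the first nontrivial constraint comes from the coefficient of $\lambda^2$, with further constraints available from $\lambda^3$ and higher if needed. The main obstacle compared to Yamamoto's Laplace-case argument is the nontrivial $l$-dependence introduced by the factor $\xi_q^{\frac{q+1}{2}kp}$: the exponent $l \equiv \frac{q+1}{2}kp \pmod q$ couples to $k(p\pm 1)$ inside the polynomial numerators of Lemma~\ref{lem:maintechnical}, so the bookkeeping is more intricate than in the Laplace case, and systematic use of $p p^* \equiv 1 \pmod q$ is essential to bring the identities into a form from which $p$ and $s$ can be extracted.

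Solving the resulting polynomial system in $\Z/q\Z$ then forces $p \equiv s \pmod q$ or $p s \equiv 1 \pmod q$, which by Corollary~\ref{cor:3dim-or-isometry} is the required $+1$-spin-isometry condition. Primality of $q$ enters twice: in Lemma~\ref{lem:maintechnical}, where it ensures $(\lambda)$ is prime in $\Z[\xi_q]$ and the localisation $\Z_{(q)}$ behaves well; and in the polynomial-vanishing step above, which breaks down for composite $q$ and thereby explains why the argument does not directly extend.
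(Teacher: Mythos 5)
Your overall strategy matches the paper's: reduce to the non-homogeneous case via Theorem~\ref{thm:homo-lensspace-rigidity}, convert the cotangents to expressions in $\xi_q$ via $\cot\theta = i - 2i/(1-e^{2i\theta})$, multiply by $\lambda$, and apply the $\lambda$-adic expansion of Lemma~\ref{lem:maintechnical} to the resulting identity in $\Z[\xi_q]$. Your explicit handling of $\varepsilon = -1$ via the observation $\sigma(I_q^{s;k}) = I_q^{-s;k}$ and the replacement $s \mapsto -s$ is correct and is a useful addition that the paper leaves implicit.

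However, there is a genuine gap. The heart of the argument is not the setup but the \emph{explicit computation} of a low-order $\lambda$-adic coefficient and the subsequent algebra that forces $p \equiv s$ or $ps \equiv 1 \pmod q$. Your proposal replaces this with the assertion that ``solving the resulting polynomial system in $\Z/q\Z$ then forces'' the conclusion, without exhibiting any concrete constraint. The paper fixes $k=1$ only, computes the coefficient $g_3$ of $\lambda^3$ in $\tfrac{i}{2}\lambda\bigl(I_q^{p;1}+I_q^{p^*;1}\bigr)$, and after a lengthy but elementary calculation arrives at the single clean relation $p + p^* \equiv s + s^* \pmod q$; squaring this and subtracting $4$ gives $(p-p^*)^2 \equiv (s-s^*)^2$, and adding/subtracting the two relations yields $p \equiv s$ or $p \equiv s^*$. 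None of this is present in your proposal. Without it one cannot know which coefficient produces a usable constraint, nor that the constraint obtained is strong enough. In fact your claim that the first nontrivial constraint appears at $\lambda^2$ does not match the paper, which needs to go to $\lambda^3$; this discrepancy is exactly the sort of thing the missing calculation would settle. Finally, your ``polynomial identity in $k$'' machinery (varying $k$ over $(\Z/q\Z)^\times$ and invoking a root-counting bound) is an unnecessary complication: the paper demonstrates that $k=1$ alone already yields the decisive relation, and introducing the extra layer brings in delicate degree bounds and denominator-clearing issues that you would then also need to control.
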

	\begin{proof}
		The case $q=2$ was discussed in Remark~\ref{rmk:rp3-spectral-symmetry}. For $q=3,5,7$ there are at most two different isometry classes, one of which is homogeneous. Thus, these are spectrally determined by Theorem~\ref{thm:homo-lensspace-rigidity}. Let $q\ge11$ and $\Le=\Le(q;p)$ and $\Le'=\Le(q;s)$ be two $+1$-isospectral lens spaces. By the same theorem we can assume $p\not\equiv\pm 1\pmod q$ and thus by assumption $s\not\equiv\pm 1\pmod q$.	
		By Corollary~\ref{cor:isospec-lensspaces-eqs} we have
		$$
			I_q^{p;1}+I_q^{p^*;1}=I_q^{p;1}+I_q^{p^*;1}\,.
		$$
		Using the formula $\cot \theta=\tfrac{2i}{1-e^{2i\theta}}-i$ and Lemma~\ref{lem:maintechnical}, we see that
		$\tfrac i2\lambda (I_q^{p;1}+I_q^{p^*;1})=\tfrac i2\lambda(I_q^{s;1}+I_q^{s^*;1})\in\Z[\xi_q] $. Let $ R=\{0,1,\ldots,q-1\} $ and
		\begin{align*}
			\begin{split}
			\frac i2\lambda(I_q^{p;1}+I_q^{p^*;1}) & = \sum_{n=0}^{\infty}g_n\lambda^n\,,\\
			\frac i2\lambda(I_q^{s;1}+I_q^{s^*;1}) & = \sum_{n=0}^{\infty}g'_n\lambda^n\\
			\end{split}
		\end{align*}		
		be the unique power series with $g_n,g'_n\in R$ for all $n\in\N$ (see e.g.\ \cite[Chapter 2.2 O]{0957.12005}). By assumption, $g_n=g'_n$ for all $n\in\N$. Furthermore, by Lemma~\ref{lem:maintechnical}
		\begin{equation*}
			\begin{split}
				g_{3} & \equiv \frac{1}{24}\sum_{(l,k)}\left(-\underbrace{\frac{1}{k}}_{=\mathtt{I}}+\underbrace{k}_{=\mathtt{II}}-\underbrace{6l}_{=\mathtt{III}}-\underbrace{2kl}_{=\mathtt{IV}}+\underbrace{\frac{6l^{2}}{k}}_{=\mathtt{V}}+\underbrace{6l^{2}}_{=\mathtt{VI}}-\underbrace{\frac{4l^{3}}{k}}_{=\mathtt{VII}}\right)\\
				&-\frac{1}{24}\sum_{(l',k')}\left(-\underbrace{\frac{1}{k'}}_{=\mathtt{I'}}+\underbrace{k'}_{=\mathtt{II'}}-\underbrace{6l'}_{=\mathtt{III'}}-\underbrace{2k'l'}_{=\mathtt{IV'}}+\underbrace{\frac{6l'^{2}}{k'}}_{=\mathtt{V'}}+\underbrace{6l'^{2}}_{=\mathtt{VI'}}-\underbrace{\frac{4l'^{3}}{k'}}_{=\mathtt{VII'}}\right)\pmod	q\,,
			\end{split}
		\end{equation*}
		where we sum over $(l,k)\in\left\{\left(\tfrac{q+1}{2}p,p-1\right),\allowbreak \left(\tfrac{q+1}{2}p^*,p^*-1\right)\right\}$ and $(l',k')\in\left\{\left(\tfrac{q+1}{2}p,p+1\right),\break \left(\tfrac{q+1}{2}p^*,p^*+1\right)\right\}$.
		
		We calculate the summands in pairs. Obviously, $\mathtt{III}-\mathtt{III'}\equiv\mathtt{VI}-\mathtt{VI'}\equiv 0 \pmod q$. Furthermore, we have
\begin{equation*}
\begin{split}
	\mathtt{I}-\mathtt{I}' & \equiv -\frac{1}{p-1}-\frac{1}{p^{*}-1}+\frac{1}{p+1}+\frac{1}{p^{*}+1}\\
	& \equiv -\frac{1}{p-1}-\frac{p}{1-p}+\frac{1}{p+1}+\frac{p}{1+p}\equiv2\pmod q\,,\\
\end{split}
\end{equation*}
\begin{equation*}
	\begin{split}
	\mathtt{II}-\mathtt{II}' & \equiv p-1+p^{*}-1-p-1-p^{*}-1\equiv-4\pmod q\,,\\
\end{split}
\end{equation*}
\begin{equation*}
	\begin{split}
	\mathtt{IV}-\mathtt{IV}' & \equiv -2\frac{q+1}{2}p(p-1)-2\frac{q+1}{2}p^{*}(p^{*}-1)\\
	& + 2\frac{q+1}{2}p(p+1)+2\frac{q+1}{2}p^{*}(p^{*}+1)\\
	& \equiv (q+1)(-p^{2}+p-\left(p^{*}\right)^{2}+p^{*}\\
	& + p^{2}+p+\left(p^{*}\right)^{2}+p^{*}\equiv2(q+1)(p+p^{*})\pmod q\,,\\
\end{split}
\end{equation*}
\begin{equation*}
	\begin{split}
	\mathtt{V}-\mathtt{V}' & \equiv \frac{3}{2}\left(q+1\right)^{2}\left(\frac{p^{2}}{p-1}+\frac{\left(p^{*}\right)^{2}}{p^{*}-1}-\frac{p^{2}}{p+1}-\frac{\left(p^{*}\right)^{2}}{p^{*}+1}\right)\\
	& \equiv \frac{3}{2}\left(q+1\right)^{2}\left(\frac{p^{2}}{p-1}-\frac{p^{*}}{p-1}-\frac{p^{2}}{p+1}-\frac{p^{*}}{p+1}\right)\\
	& \equiv \frac{3}{2}\left(q+1\right)^{2}\left(\frac{p^{2}-p^{*}}{p-1}-\frac{p^{2}+p^{*}}{p+1}\right)\\
	& \equiv \frac{3}{2}\left(q+1\right)^{2}\left(\frac{p^{3}+p^{2}-1-p^{*}-p^{3}-1+p^{2}+p^{*}}{p^{2}-1}\right)\\
	& \equiv \frac{3}{2}\left(q+1\right)^{2}\left(\frac{2p^{2}-2}{p^{2}-1}\right)\equiv3(q+1)^{2}\pmod q\,,\\
\end{split}
\end{equation*}
\begin{equation*}
	\begin{split}
	\mathtt{VII}-\mathtt{VII}' & = -\frac{\left(q+1\right)^{3}}{2}\left(\frac{p^{3}}{p-1}+\frac{\left(p^{*}\right)^{3}}{p^{*}-1}-\frac{p^{3}}{p+1}-\frac{\left(p^{*}\right)^{3}}{p^{*}+1}\right)\\
	& \equiv -\frac{\left(q+1\right)^{3}}{2}\left(\frac{p^{3}-\left(p^{*}\right)^{2}}{p-1}-\frac{p^{3}+\left(p^{*}\right)^{2}}{p+1}\right)\\
	& \equiv -\frac{\left(q+1\right)^{3}}{2}\left(\frac{p^{4}-p^{*}+p^{3}-\left(p^{*}\right)^{2}-p^{4}-p^{*}+p^{3}+\left(p^{*}\right)^{2}}{p^{2}-1}\right)\\
	& \equiv -\frac{\left(q+1\right)^{3}}{2}\left(\frac{2p^{3}-2p^{*}}{p^{2}-1}\right)\\
	& \equiv -\left(q+1\right)^{3}\left(\frac{p^{2}-\left(p^{*}\right)^{2}}{p-p^{*}}\right)\equiv-\left(q+1\right)^{3}\left(p+p^{*}\right)\pmod q\,.
\end{split}
\end{equation*}
Putting everything together, we obtain
\begin{equation*}
	\begin{split}
	g_{3} & \equiv \frac{1}{24}(2-4+2(q+1)\left(p+p^{*}\right)\\
	& + 3(q+1)^{2}-\left(q+1\right)^{3}\left(p+p^{*}\right))\\
	& \equiv \frac{1}{24}\left(-2+3(q+1)^{2}-\left(p+p^{*}\right)(q+1)\left(q^{2}+2q-1\right)\right)\pmod q\,.
	\end{split}
\end{equation*}
Analogously, 
\begin{equation*}
	\begin{split}
		g'_{3} & \equiv \frac{1}{24}\left(-2+3(q+1)^{2}-\left(s+s^{*}\right)(q+1)\left(q^{2}+2q-1\right)\right)\pmod q\,.
	\end{split}
\end{equation*}
Since, by assumption, $g_3 = g'_3$, we have
\begin{equation}\label{eqn:final1}
p+p^*\equiv s+s^* \pmod q\,.
\end{equation}
Squaring both sides of \eqref{eqn:final1} yields
$$
p^2+2+(p^*)^2\equiv s^2+2+(s^*)^2\pmod q
$$
which, after subtracting $4$ on both sides, leads to
$$
(p-p^*)^2\equiv (s-s^*)^2\pmod q\,.
$$
This means 
\begin{equation}\label{eqn:final2}
(p-p^*)\equiv \pm (s-s^*)\pmod q\,.
\end{equation}
Addition and subtraction of equations \eqref{eqn:final1} and \eqref{eqn:final2} yields
$$
p\equiv s \pmod q\text{	or	} p\equiv s^*\pmod q\,,
$$
which means, by Corollary~\ref{cor:3dim-or-isometry}, that $\Le$ and $\Le'$ are $+1$-isometric. Since for odd $q$ the lens spaces $\Le(q;p)$ admit only one spin structure, $\Le$ and $\Le'$ are trivially $+1$-spin-isometric.
\end{proof}

\begin{remark}
	A nontrivial question is whether the assumption that $q$ be prime in Theorem~\ref{thm:mainthm} is necessary for the conclusion to hold. High precision computer calculations of the numbers $I_q^{p;k}$ and $I_q^{p;k;h}$ for $2\le q\le10^4$, $p$ from a full set of representatives of $\Z_q$, $1\le k<q, k(p\pm1)\not\equiv 0\pmod q$ and all $h\in\{0,1\}$ suggest that the condition can probably be dropped. These calculations were verified with a computer and the exact method from \cite{2014arXiv1412.2599B} for all parameters in the same range, so that the following conjecture is reasonable:
\end{remark}
\begin{conjecture}\label{con:all_q}
	Any two three dimensional $\varepsilon$-isospectral lens spaces are $\varepsilon$-spin-isometric.
\end{conjecture}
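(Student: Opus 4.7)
The plan is to imitate Yamamoto's $\lambda$-adic approach to \eqref{eqn:intro_0} but applied to the more delicate equations \eqref{eqn:intro_3} coming from Corollary~\ref{cor:isospec-lensspaces-eqs}. The very small primes $q\in\{2,3,5,7\}$ I would handle separately: in each case every three-dimensional lens space $\Le(q;p)$ lies in a tiny family that already contains a homogeneous representative, so Theorem~\ref{thm:homo-lensspace-rigidity} applies. For $q\ge 11$ prime I may also assume, again by Theorem~\ref{thm:homo-lensspace-rigidity}, that neither $\Le(q;p)$ nor $\Le(q;s)$ is homogeneous, so $p,s\not\equiv\pm1\pmod q$; in particular $k=1$ is admissible in Corollary~\ref{cor:isospec-lensspaces-eqs}, yielding $I_q^{p;1}+I_q^{p^*;1}=I_q^{s;1}+I_q^{s^*;1}$ in the $+1$-isospectral case.

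My next step is to turn this transcendental identity into an arithmetic one. Rewriting the cotangents via $\cot\theta = 2i/(1-e^{2i\theta})-i$ and multiplying through by $i\lambda/2$, both sides become elements of $\Z[\xi_q]$, and Lemma~\ref{lem:maintechnical} then expresses each side as a convergent power series in $\lambda$ whose coefficients lie in $\Z_{(q)}$. Comparing these series coefficient by coefficient produces a sequence of congruences modulo $q$ in $p,p^*,s,s^*$. The natural heuristic is that the $+1$-isometry class of $\Le(q;p)$ is pinned down by the unordered pair $\{p,p^*\}\pmod q$ (Corollary~\ref{cor:3dim-or-isometry}), and since $pp^*\equiv1$ the only nontrivial symmetric function available is $p+p^*$; so the goal is to extract from some $\lambda$-adic coefficient a congruence of the form $p+p^*\equiv s+s^*\pmod q$.

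Once that congruence is secured, the identity $(p-p^*)^2=(p+p^*)^2-4pp^*$ combined with $pp^*\equiv ss^*\equiv1$ gives $(p-p^*)^2\equiv(s-s^*)^2\pmod q$, hence $p-p^*\equiv\pm(s-s^*)\pmod q$ because $\Z/q\Z$ is a field. Adding and subtracting with the previous congruence yields $p\equiv s$ or $p\equiv s^*\pmod q$, which by Corollary~\ref{cor:3dim-or-isometry} is precisely the condition for $+1$-isometry; $+1$-spin-isometry then follows since odd $q$ admits only one spin structure. The $-1$-isospectral case proceeds identically after applying complex conjugation to one side of Corollary~\ref{cor:isospec-lensspaces-eqs}: conjugation acts on $\lambda$ by a unit and permutes the coefficients in a controlled way, delivering $p+p^*\equiv-(s+s^*)\pmod q$ and therefore $-1$-isometry.

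The main obstacle is the coefficient calculation itself. The expansion in Lemma~\ref{lem:maintechnical} shows that each summand $I_q^{p;1}$ contributes a power series whose $n$-th coefficient is a polynomial of degree at most $n$ in $l$ and $1/k$, with $l=(q+1)p/2$ and $k\in\{p-1,p+1\}$; summing over the four pairs arising from $p,p^*$ and their signs induces heavy cancellation. I expect the $\lambda^0$, $\lambda^1$, and $\lambda^2$ coefficients to reduce, after using $pp^*\equiv1$, to expressions independent of $p$, so they encode only spectral invariants already known. The cubic coefficient $g_3$ should be the first to retain a genuine dependence on $p+p^*\pmod q$; showing that this dependence is a nonzero multiple of $p+p^*$ for every prime $q\ge11$ (so the congruence can actually be read off) is the delicate technical point where the seven sub-terms $1/k,\,k,\,l,\,kl,\,l^2/k,\,l^2,\,l^3/k$ must be simplified in pairs and combined.
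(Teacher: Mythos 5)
The statement you are trying to prove is \emph{Conjecture}~\ref{con:all_q}, which the paper explicitly does \emph{not} prove. The preceding remark states that Theorem~\ref{thm:mainthm} is believed to hold for general $q$ only on the basis of machine computations, and the introduction says plainly that Yamamoto's $\lambda$-adic method ``does not work for arbitrary $q\in\N$\ldots{} the method of proof would have to be different.'' Your proposal re-derives the paper's own proof of Theorem~\ref{thm:mainthm}, restricted to primes: every case you treat has $q\in\{2,3,5,7\}$ or ``$q\ge11$ prime.'' You never say a word about composite $q$, which is precisely what is open.

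The gap is not cosmetic. The $\lambda$-adic machinery collapses for composite $q$ in two distinct ways, both of which would have to be addressed before Lemma~\ref{lem:maintechnical} could even be stated. First, if $q$ has two or more distinct prime factors then $\Phi_q(1)=1$, so $\lambda=1-\xi_q$ is a \emph{unit} in $\Z[\xi_q]$; the $\lambda$-adic valuation is trivial and there is no power-series expansion to compare. Second, even when $q=\ell^a$ is an odd prime power with $a\ge 2$, the exponents that actually occur in $I_q^{p;1}$ are $p\pm 1$ and $p^*\pm 1$, and for non-homogeneous $\Le(q;p)$ Lemma~\ref{lem:lem2} only forces $(q,p-1)$ and $(q,p+1)$ to be \emph{coprime to each other}, not both equal to $1$; so one of $1-\xi_q^{p\pm1}$ can lie in the prime ideal to higher order, $\lambda/(1-\xi_q^{p\pm1})$ need not be an algebraic integer, and the key identity $\lambda/(1-\xi_q^k)=1+\xi_q^k+\cdots+\xi_q^{k(k^*-1)}$ in the proof of Lemma~\ref{lem:maintechnical} uses $(k,q)=1$. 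Finally, for even $q$ you would also need to treat the two spin structures via the more complicated congruence~\eqref{eq:transcendental-isospec-q_even}, which your sketch does not touch. Any genuine proof of Conjecture~\ref{con:all_q} must contain an idea beyond the one in the paper; as written, your proposal contains none, and what it does contain is essentially identical to the published proof of Theorem~\ref{thm:mainthm}.
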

\begin{remark}\label{rem:main-conjecture_consequences}
Note that Theorem~\ref{thm:mainthm} implies for each odd prime $q$ that $\Le(q;p)$ has symmetric spectrum if and only if $p^2\equiv-1\pmod q$. If Conjecture~\ref{con:all_q} is true, then this will hold for all odd $q$. Furthermore, under this assumption, $\Le(q;p;0)$ will have the same spectrum as $\Le(q;p;1)$ if and only if $p^2\equiv1\pmod q$ and $(p^2-1)/q$ is odd. Expanding on Remark~\ref{rmk:rp3-spectral-symmetry}, $\Le(q;p;0)$ will then be $-1$-isometric to $\Le(q;p;1)$ if and only if $p^2\equiv-1\pmod q$ and $(p^2+1)/q$ is odd.
\end{remark}

\section{the $\eta$-invariant}\label{sec:eta-invariant}

The $\eta$-invariant measures the asymmetry of the Dirac spectrum in dimension $n\equiv 3\pmod 4$ and appears in the Atiyah-Patodi-Singer index  Theorem (cf.\ \cite[Chapter 8.7]{1186.58020}). We here present a formula due to C. B\"ar for the $\eta$-invariant on spherical space forms and, more specifically, on lens spaces. Using this formula and the method from the last section, we prove that, in case $q$ is a prime, any two lens spaces $\Le=\Le(q;p)$ and $\Le'=\Le(q;s)$ with $\eta^\Le=\varepsilon\eta^{\Le'}$ are $\varepsilon$-spin-isometric. Example~\ref{exm:eta-invariant} shows that this statement does not generalize to all $q\in\N$. Note that (modulo the $\varepsilon$-statement) both of these results are already known (see \cite{MR926246}), though the method of proof of the first one is new.

\begin{theorem}[{\cite[Theorem 5.2	]{0995.58020}}]
	Let $\Gamma\setminus S^{(2m-1)}$ be a spherical space form with spin structure given by $\tau:\Gamma\to\Spin(2m)$. The $\eta$-invariant of $\Gamma\setminus S^{2m-1}$ is
	$$
 \eta = \frac{2}{|\Gamma|}\sum_{\gamma\in\Gamma\setminus\{\Id_{2m}\}}\frac{(\chi^--\chi^+)(\tau(\gamma))}{\det(\Id_{2m}-\gamma)} \,.
 $$
\end{theorem}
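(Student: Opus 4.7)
The plan is to identify $\eta$ with the value at $z=1$ of the meromorphic function $F_+(z) - F_-(z)$ and then read off the formula from Theorem~\ref{thm:mero-ext-gen-func}.

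First I would compute $F_+(z) - F_-(z)$ directly from \eqref{eq:mero-ext-gen-func}. Since the two generating functions share a common denominator $\det(\Id - z\gamma)$ for each $\gamma$, the numerators combine as $\chi^-(\tau(\gamma)) - \chi^+(\tau(\gamma)) - z\bigl(\chi^+(\tau(\gamma)) - \chi^-(\tau(\gamma))\bigr) = (1+z)(\chi^- - \chi^+)(\tau(\gamma))$, yielding
$$
F_+(z) - F_-(z) = \frac{1+z}{|\Gamma|}\sum_{\gamma\in\Gamma}\frac{(\chi^--\chi^+)(\tau(\gamma))}{\det(\Id - z\gamma)}.
$$
The $\gamma = \Id$ summand is identically zero, because $\tau(\Id) = \Id_{\Spin(2m)}$ and $\chi^+(\Id_{\Spin(2m)}) = \chi^-(\Id_{\Spin(2m)}) = 2^{m-1}$. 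For every remaining $\gamma \neq \Id$, freeness of the $\Gamma$-action on $S^{2m-1}$ forces $1$ not to be an eigenvalue of $\gamma \in \SO(2m)$, so $\det(\Id - \gamma) \neq 0$ and each summand is regular at $z = 1$. Substituting $z = 1$ and using that $1+z$ evaluates to $2$ then gives exactly the right-hand side of the claimed formula.

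The step that needs genuine justification is the identification of $(F_+ - F_-)(1)$ with the zeta-regularized $\eta$-invariant $\eta(0)$, where $\eta(s) = \sum_{k \ge 0}(m_k^+ - m_k^-)\bigl(\tfrac{2m-1}{2} + k\bigr)^{-s}$. This is the main obstacle: $(F_+-F_-)(1)$ is formally an Abel sum of $\sum_k (m_k^+ - m_k^-)$, whereas the $\eta$-invariant is defined via analytic continuation in $s$. To reconcile the two, I would take the contribution of each $\gamma \neq \Id$ separately and expand $(\chi^--\chi^+)(\tau(\gamma))/\det(\Id - z\gamma)$ in partial fractions with poles on the unit circle (the reciprocals of the eigenvalues of $\gamma$), producing a finite combination of geometric and higher-order series of the form $\sum_k P(k)\mu^k z^k$ with $|\mu| = 1$, $\mu \neq 1$. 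For each such series, the Abel value at $z=1$ coincides with the zeta-regularized value at $s=0$ of $\sum_k P(k)\mu^k (\tfrac{2m-1}{2}+k)^{-s}$ by a Hurwitz zeta identity, and this is where regularity on the unit circle is used. Summing the $\gamma$-contributions and noting that the $\gamma = \Id$ term contributes $\eta(S^{2m-1}) = 0$ by the symmetry of the spherical Dirac spectrum (Theorem~\ref{thm:spherespectrum}) completes the proof.
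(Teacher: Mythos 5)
The paper does not prove this statement; it is quoted verbatim from B\"ar \cite{0995.58020}, so there is no in-paper proof to compare against. Your argument is sound and is essentially the strategy of the original proof: the algebraic reduction is correct (the numerators of $F_+-F_-$ combine to $(1+z)(\chi^--\chi^+)(\tau(\gamma))$, the $\gamma=\Id$ term vanishes identically since $\chi^\pm(\Id)=2^{m-1}$, and freeness guarantees regularity of the remaining terms at $z=1$), and you correctly identify the only nontrivial point, namely that the Abel sum $(F_+-F_-)(1)$ equals the value $\eta(0)$ of the analytically continued eta series. Your justification of that point is the standard one and does work: after partial fractions each $\gamma\neq\Id$ contributes finitely many series $\sum_k P(k)\mu^k(\tfrac{2m-1}{2}+k)^{-s}$ with $\mu$ a root of unity $\neq 1$, and these are finite combinations of Lerch transcendents $\Phi(\mu,s-j,a)$, which are entire in $s$ for $|\mu|=1$, $\mu\neq1$ and satisfy $\Phi(\mu,0,a)=\frac{1}{1-\mu}$, i.e.\ agree at $s=0$ with the corresponding Abel sums. (The identity you invoke is for the Lerch, or periodic, zeta function rather than the Hurwitz zeta function, which is the excluded case $\mu=1$; that is only a matter of naming.)
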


\begin{corollary}
	Let $\Le=\Le(q;p_1,\ldots,p_m)$ be a lens space. If $q$ is odd, then the $\eta$-invariant of the Dirac operator on the lens space $L$ is given by
	$$
		\eta^{\Le} = (-1)^{m/2+1}\frac{1}{q2^{m-1}}\sum_{k=1}^{q-1}\prod_{j=1}^m \csc\frac{(q+1)kp_j}{q}\pi \,.
	$$
	If $q$ and $m$ are even and $\Le$ is equipped with the spin structure $\tau_h$, then the $eta$-invariant of $\Le$ is
	$$
		\eta^{\Le} = (-1)^{m/2+1}\frac{1}{q2^{m-1}}\sum_{k=1}^{q-1}(-1)^{k(h+h_q^{p*})}\prod_{j=1}^m \csc\frac{kp_j}{q}\pi \,.
	$$
\end{corollary}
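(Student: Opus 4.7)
The plan is to specialize the preceding theorem to $\Gamma=\langle\gamma_q^p\rangle$ and reduce the resulting sum to closed trigonometric form. The denominator is immediate: the eigenvalues of $\gamma_q^p$ on $\R^{2m}$ are (after complexification) $\xi_q^{\pm p_j}$, $1\le j\le m$, so
\[
\det\bigl(\Id_{2m}-(\gamma_q^p)^k\bigr) = \prod_{j=1}^m(1-\xi_q^{kp_j})(1-\xi_q^{-kp_j}) = 4^m\prod_{j=1}^m\sin^2\tfrac{kp_j\pi}{q}\,.
\]

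For the numerator, Proposition~\ref{prop:lensspace-spin-str} writes $\tau((\gamma_q^p)^k)$ (resp.\ $\tau_h((\gamma_q^p)^k)$) as $\prod_j(\cos\theta_j+\sin\theta_j\, e_{2j-1}e_{2j})$ with $\theta_j=\tfrac{k(q+1)p_j\pi}{q}$ when $q$ is odd, and $\theta_j=\tfrac{kp_j\pi}{q}$ together with a central scalar $(-1)^{k(h+h_q^p)}$ when $q$ is even. The values of the half-spin characters on such Clifford elements, as recalled in the proof of Corollary~\ref{cor:generating_functions_lens_spaces}, are $\chi^\mp=\sum_{\epsilon_1\cdots\epsilon_m=(-1)^{m\pm 1}}\prod_j e^{i\epsilon_j\theta_j}$. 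Rewriting the parity constraint as a signed sum factors the difference cleanly:
\[
(\chi^--\chi^+)\Bigl(\prod_j(\cos\theta_j+\sin\theta_j e_{2j-1}e_{2j})\Bigr) = (-1)^{m+1}\sum_{\epsilon\in\{\pm1\}^m}(\epsilon_1\cdots\epsilon_m)\prod_j e^{i\epsilon_j\theta_j} = (-1)^{m+1}(2i)^m\prod_{j=1}^m\sin\theta_j\,.
\]

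Inserting these formulas into the theorem and simplifying $(2i)^m/4^m=i^m/2^m$ produces, in the odd $q$ case,
\[
\eta^\Le=\frac{(-1)^{m+1}i^m}{q\,2^{m-1}}\sum_{k=1}^{q-1}\prod_{j=1}^m\frac{\sin\tfrac{(q+1)kp_j\pi}{q}}{\sin^2\tfrac{kp_j\pi}{q}}\,,
\]
with the obvious modification for even $q$ (no $(q+1)$ factor in the numerator and an extra $(-1)^{k(h+h_q^p)}$). The final simplification is the integer-shift identity $\sin\tfrac{(q+1)kp_j\pi}{q}=(-1)^{kp_j}\sin\tfrac{kp_j\pi}{q}$, valid because $kp_j\in\Z$; this collapses each factor to $\csc\tfrac{(q+1)kp_j\pi}{q}$ (using also $\csc\tfrac{(q+1)kp_j\pi}{q}=(-1)^{kp_j}\csc\tfrac{kp_j\pi}{q}$). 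Since the $\eta$-invariant is considered in dimension $n=2m-1\equiv 3\pmod 4$, $m$ is even, so $(-1)^{m+1}i^m=(-1)^{m/2+1}$, producing both formulas. There is no substantive obstacle: the argument is a bookkeeping calculation in spin characters, and the only subtle point is the $(q+1)$-shift-induced sign, which is absorbed in the conversion between the two forms of the cosecant in the final display.
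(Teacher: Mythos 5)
Your proof is correct and is exactly the computation the paper leaves implicit (the corollary is stated without proof as a direct specialization of the preceding theorem): evaluate $\det(\Id-(\gamma_q^p)^k)$, factor $\chi^--\chi^+$ as a signed sum over $\{\pm1\}^m$ to get $(-1)^{m+1}(2i)^m\prod_j\sin\theta_j$, and absorb the $(q+1)$-shift sign into the cosecant, using that $m$ is even. Two cosmetic remarks: your index condition ``$\prod_j\epsilon_j=(-1)^{m\pm1}$'' should read $(-1)^{m+1}$ for $\chi^-$ and $(-1)^{m}$ for $\chi^+$ (as in Corollary~\ref{cor:generating_functions_lens_spaces}), and your even-$q$ sign $(-1)^{k(h+h_q^{p})}$ is the natural outcome of the computation (the exponent $h_q^{p*}$ printed in the statement appears to be a typo).
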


\begin{theorem}\label{thm:etaeqisom}
	Let $q\in\N$ be a prime, $\varepsilon\in\{\pm 1\}$, and let $\Le = \Le(q;p)$, $\Le' = \Le(q;p)$ be lens spaces such that
	$\eta^{\Le} = \varepsilon \eta^{\Le'}$. Then $\Le$ and $\Le'$ are $\varepsilon$-spin-isometric.
\end{theorem}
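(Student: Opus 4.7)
The plan mirrors the proof of Theorem~\ref{thm:mainthm}, with the set of residue identities from Corollary~\ref{cor:isospec-lensspaces-eqs} replaced by the single identity $\eta^\Le = \varepsilon\eta^{\Le'}$. The small cases $q\in\{2,3\}$ are handled directly (for $q=2$ by Remark~\ref{rmk:rp3-spectral-symmetry}, and for $q=3$ by checking the two homogeneous classes $\Le(3;1),\Le(3;2)$, which are $-1$-isometric with nonzero $\eta$-invariants of opposite sign), so I assume $q\ge 5$ is prime, set $m=(q+1)/2$, $\lambda=1-\xi_q$, and write $\Le=\Le(q;p)$, $\Le'=\Le(q;s)$.

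The first step is to recast the $\eta$-invariant formula of the preceding corollary as a cyclotomic identity in $\Z[\xi_q]$. Using $\sin((q+1)k\pi/q)=(-1)^k\sin(k\pi/q)$ together with the relation $\xi_{2q}=-\xi_q^m$ available because $q$ is odd, one finds
\[
\eta^\Le \;=\; -\frac{2}{q\lambda^{2}}\,\Sigma^\Le,\qquad \Sigma^\Le \;:=\; \sum_{k=1}^{q-1}\xi_q^{mk}\frac{\lambda}{1-\xi_q^k}\cdot\xi_q^{mkp}\frac{\lambda}{1-\xi_q^{kp}}\,,
\]
and $\Sigma^\Le\in\Z[\xi_q]$ since each factor $\lambda/(1-\xi_q^k)$ is a cyclotomic unit. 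The hypothesis $\eta^\Le=\varepsilon\eta^{\Le'}$ is then equivalent to the cyclotomic equality $\Sigma^\Le=\varepsilon\Sigma^{\Le'}$, which in particular equates all $\lambda$-adic coefficients modulo $q$.

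Next, I will apply Lemma~\ref{lem:maintechnical} to each factor in a summand of $\Sigma^\Le$ and multiply out the resulting double Cauchy product, exploiting $2m\equiv 1\pmod q$ to simplify the polynomial coefficients. The $\lambda^0$ coefficient of $\Sigma^\Le$ reduces modulo $q$ to $p^{-1}\sum_{k=1}^{q-1}k^{-2}$, which is $\equiv0\pmod q$ for $q\ge 5$; the same phenomenon kills the $\lambda^{1}$ coefficient. A computation parallel in spirit to the one for $g_3$ in Theorem~\ref{thm:mainthm} should then give
\[
[\lambda^{2}]\,\Sigma^\Le \;\equiv\; \frac{1+p^{2}}{24p}\pmod q\,,
\]
so that matching the $\lambda^2$ coefficients of $\Sigma^\Le$ and $\varepsilon\Sigma^{\Le'}$ yields
\[
p+p^{*}\;\equiv\;\varepsilon\,(s+s^{*})\pmod q.
\]
Since $pp^{*}\equiv 1\equiv(\varepsilon s)(\varepsilon s^{*})\pmod q$, the sets $\{p,p^{*}\}$ and $\{\varepsilon s,\varepsilon s^{*}\}$ are both root sets of the monic quadratic $x^{2}-(p+p^{*})x+1$ modulo $q$ and hence coincide; therefore $p\equiv\varepsilon s$ or $p\equiv\varepsilon s^{*}\pmod q$. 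Either alternative triggers Corollary~\ref{cor:3dim-or-isometry} and supplies an $\varepsilon$-isometry between $\Le$ and $\Le'$, automatically an $\varepsilon$-spin-isometry because for odd $q$ each lens space carries a unique spin structure.

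The main obstacle is the explicit $\lambda^{2}$ bookkeeping: two power series must be multiplied to second order, the coefficients summed over $k\in\{1,\dots,q-1\}$, and the simultaneous vanishings at orders zero and one verified. These vanishings hinge on $q\ge 5$ through $\sum_{k=1}^{q-1}k^{-2}\equiv 0\pmod q$; without them the invariant $(1+p^{2})/(24p)$ would be obscured by surviving lower-order noise, and the whole argument would collapse, which is consistent with the conjecture that the primality of $q$ is genuinely needed.
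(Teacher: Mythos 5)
Your proposal is correct and takes essentially the same route as the paper: the paper's own proof is a four-line sketch that writes $\csc$ via $2ie^{-ix}/(1-e^{-2ix})$, expands the resulting products $\xi_q^l\frac{\lambda}{1-\xi_q^k}\frac{\lambda}{1-\xi_q^n}$ $\lambda$-adically via (a generalization of) Lemma~\ref{lem:maintechnical}, and compares the first informative coefficient, which is exactly your $[\lambda^2]\Sigma^{\Le}\equiv(1+p^2)/(24p)\equiv(p+p^*)/24\pmod q$ after the automatic vanishing at orders $0$ and $1$. Your write-up correctly fills in the bookkeeping the paper omits (and your quadratic-root-set argument is an equivalent variant of the squaring step used in Theorem~\ref{thm:mainthm}).
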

\begin{proof}
	The case $q=2$ was already discussed in Remark~\ref{rmk:rp3-spectral-symmetry}. Let $q\ge3$. 
	Represent $\csc x$ as $2i e^{-i x}\frac{1}{1-e^{-2 i x}}$. A simple generalization of Lemma~\ref{lem:maintechnical} to the case of products of the form $\xi_q^l\frac{\lambda}{1-\xi_q^k}\frac{\lambda}{1-\xi_q^n}$ yields convergent power series in $\lambda$ for 
	$\eta^{\Le}$ and $\eta^{\Le'}$, along with expressions for the coefficients of $\lambda^0$. Comparing the latter yields $p\equiv \varepsilon s \pmod q$ or $p\equiv \varepsilon s^\ast \pmod q$.
\end{proof}

\begin{example}\label{exm:eta-invariant}
	There are six isometry classes of three dimensional lens spaces whose fundamental groups have order 25. Representatives are $\Le(25;1)$, $\Le(25;2)$, $\Le(25;3)$, $\Le(25;4)$, $\Le(25;7)$ and $\Le(25;9)$. The corresponding $\eta$-invariants are $\frac{52}{25}$, $-\frac{2}{5}$, $\frac{4}{5}$, -$\frac{2}{25}$, $0$ and $-\frac{2}{25}$. In particular, Theorem \ref{thm:etaeqisom} does not generalize to all $q\in\N$.
\end{example}

\bibliographystyle{halpha}
\bibliography{library}

\end{document}